\newtheorem{theorem}{Theorem}[section]
\newtheorem{corollary}[theorem]{Corollary}
\newtheorem{proposition}[theorem]{Proposition}
\newtheorem{definition}[theorem]{Definition}
\newtheorem{remark}[theorem]{Remark}
\title{Splines in Geometry and Topology}
\author{Julianna Tymoczko}
\begin{document}
\begin{abstract}
This survey paper describes the role of splines in geometry and topology, emphasizing both similarities and differences from the classical treatment of splines.  The exposition is non-technical and contains many examples, with references to more thorough treatments of the subject.
\end{abstract}
\maketitle

The goal of this survey paper is to describe how splines arise in geometry and topology.  Geometric splines usually appear under the name {\em GKM theory} after Goresky-Kottwitz-MacPherson, who developed them to compute what is called the {\em cohomology ring} of a geometric object.  Geometers and analysts ask many of the same questions about splines: what is their dimension? can we identify a basis?  can we find explicit formulas for the elements of the basis?  However geometric constraints can change the tone of these questions: the splines may satisfy various symmetries or have a basis satisfying certain conditions.  And some questions are specific to geometric splines: geometers particularly care about the multiplication table with respect to a given basis. 

In Section \ref{section: GKM theory} we discuss GKM theory, followed in Section \ref{section: examples} by some important families of geometric examples, some of which are well-known to students of analytic splines and some of which may be useful in future. Section \ref{section: tools} sketches some techniques that are natural from the perspective of a geometer/topologist, including Morse flows and symmetries that come from geometric representation theory.  Finally in Section \ref{section: generalized splines} we generalize splines to a more abstract ring setting, both as a useful conceptual framework and because it provides new combinatorial tools.

This paper is targeted at researchers in geometric design, especially those with an analytic background. Our aim is to give an overview of theoretical tools and techniques from geometry and topology; we often illustrate concepts by example and refer to the literature for details on technical aspects.  A reader with a different mathematical perspective may be interested in surveys like \cite{Hol08, Tym05, Tym08a}.

\section{GKM theory}\label{section: GKM theory}

Cohomology is an algebraic gadget associated to a geometric object $X$ that encodes various properties of $X$.  Among other things, the cohomology of $X$ indicates the dimension of $X$, the number of connected components (how many separate pieces $X$ has), how many holes $X$ has, whether $X$ has singularities, and how different subspaces of $X$ intersect. 

We could treat very general kinds of geometric objects but for simplicity in this survey we take $X$ to be a compact complex manifold.  When we say that cohomology is an ``algebraic gadget" the most general interpretation is that cohomology is a ring.  In the cases of interest here, the cohomology ring is actually an algebra, namely a vector space in which one can multiply vectors.  The technical condition we assume is that {\em cohomology has coefficients in} $\mathbb{Q}, \mathbb{R}$, or especially $\mathbb{C}$.

In fact we will consider an enhanced version of cohomology called $T$-equivariant cohomology.  Equivariant cohomology has strictly more information than ordinary cohomology yet surprisingly can be easier to compute.  In our case $T$ is a torus, namely the group
\[T = \mathbb{C}^* \times \mathbb{C}^* \times \cdots \times \mathbb{C}^*\]
where $\mathbb{C}^*$ denotes the group of nonzero complex numbers.  

\begin{remark}
In the literature algebraic geometers tend to use the torus above while differential geometers tend to use a product of circles $T=S^1 \times \cdots \times S^1$.  This makes very little difference in practice because $\mathbb{C}^*$ contracts to $S^1$.  In other words, thinking of ${\mathbb{C}^*}$ as the plane with the origin removed, we can squeeze it 
thinner and thinner until only $S^1$ remains.  This squeezing process respects the constructions that underly cohomology.  The outcome after formalizing is that in cases where either $S^1 \times \cdots \times S^1$ or $\mathbb{C}^* \times \mathbb{C}^* \times \cdots \times \mathbb{C}^*$ can be used, the equivariant cohomology is the same.
\end{remark}

We require $T$ to act on $X$ ``nicely", which for us means:
\begin{enumerate}
\item $X$ has a finite number of $T$-fixed points
\item $X$ has a finite number of one-(complex)-dimensional $T$-orbits 
\item $X$ is ``equivariantly formal"
\end{enumerate}
The first two are geometric conditions that are relatively straightforward to check in specific examples.  The last is a technical condition that amounts to a certain spectral sequence degenerating.  In practice one usually assumes one of the many conditions that {\em implies} equivariant formality, for instance any of the following conditions:
\begin{itemize}
\item $X$ is a smooth complex projective algebraic variety
\item $X$ has no odd-dimensional ordinary cohomology
\item $X$ has a $T$-stable cell decomposition
\end{itemize}

We refer to this set of hypotheses as the {\em GKM hypotheses}.

\begin{definition}
If $X$ is a compact complex manifold with the action of a torus $T = \mathbb{C}^* \times \mathbb{C}^* \times \cdots \times \mathbb{C}^*$ that satisfies (1)--(3) then we say $X$ and $T$ satisfy the {\em GKM hypotheses}.
\end{definition}

\subsection{Examples of varieties with ``nice" torus actions}\label{subsection: projective space moment graph}

\subsubsection{The projective line $\mathbb{P}^1$}  Consider the collection of lines through the origin in the plane, which we call $\mathbb{P}^1$.  Each point $(x,y)$ in the plane other than the origin gives rise to a point in $\mathbb{P}^1$ by taking the unique line through $(x,y)$ and $(0,0)$.  Of course  many points give rise to the same line; in fact multiplying the vector $(x,y)$ by a nonzero constant won't change its direction and thus won't change the line through the origin on which it lies.  We use square brackets to indicate points in $\mathbb{P}^1$ itself, which are equivalence classes of points in the plane.  In other words we could write
\[\mathbb{P}^1 = \frac{\{ [x,y] : x,y \in \mathbb{C} \textup{ and at least one of }x,y \textup{ is nonzero}\}}{[x,y] \sim [\lambda x , \lambda y] \textup{ for all nonzero } \lambda \in \mathbb{C}^*}\]
to indicate that two different equations for a line are equivalent if they differ by a nonzero constant multiple.

There are different ways for a torus to act on $\mathbb{P}^1$.  Of interest to us is the following action of $T=\mathbb{C}^*$ on $\mathbb{P}^1$.  Suppose that $t \in \mathbb{C}^*$ and that $[x,y] \in \mathbb{P}^1$.  Then 
\[t \cdot [x,y] = [x,ty] \]
At this point the reader should pause and try to identify the fixed points of $\mathbb{P}^1$ under this torus action as well as the (only) one-dimensional orbit.

Notice that $[x,0]$ is fixed by this $T$-action.  Since $x \neq 0$ we conclude that $[x,0]$ is actually the single point $[1,0]$ by definition of $\mathbb{P}^1$.  In other words 
\[[x,0] \sim [1,0]\]
In fact $[x,0] \sim [x',0]$ for any nonzero $x,x' \in \mathbb{C}^*$ since $[x',0] = [\frac{x'}{x}x,\frac{x'}{x}0]$.  

A similar argument shows that $[0,y]$ is the single point $[0,1]$ in $\mathbb{P}^1$.  It is also a $T$-fixed point, since $t \cdot [0,1] = [0,t]$ by definition of the group action and $[0,t] \sim [0,1]$ by the  argument above.

In fact these are the only $T$-fixed points.  Indeed suppose that $[x,y]$ is $T$-fixed, meaning that $[x,ty] \sim [x,y]$ for every possible $t \in \mathbb{C}^*$.  Then there exists a nonzero scalar $\lambda$ with $[\lambda x, \lambda y] = [x,ty]$.  On the one hand $\lambda x = x$ so either $\lambda$ is one or $x$ is zero.  On the other hand $\lambda y = ty$ so either $\lambda = t$ or $y$ is zero.  Since $t$ is arbitrary we conclude that one of $x$ or $y$ must be zero.

In this example there is exactly one one-dimensional $T$-orbit consisting of all points $[x,y]$ with both $x \neq 0, y \neq 0$.  Indeed any such point $[x,y]$ can be written $[x,y] \sim [1,y']$ by multiplying by $\lambda = 1/x$.  Any two points $[1,y']$ and $[1,y'']$ are in the same $T$-orbit because we can choose $t = y'/y''$.  (The choices of $\lambda$ and $t$ are valid because all of $x$, $y'$, and $y''$ are nonzero.)

\subsubsection{The projective plane $\mathbb{P}^2$}   The projective plane $\mathbb{P}^2$ consists of all lines through the origin in $\mathbb{C}^3$.  We can describe it similarly to the projective line:
\[\mathbb{P}^2 = \frac{\{ [x_1,x_2, x_3] : x_1,x_2,x_3 \in \mathbb{C} \textup{ and at least one of }x_1,x_2,x_3 \textup{ is nonzero}\}}{[x_1,x_2, x_3] \sim [\lambda x_1,\lambda x_2, \lambda x_3] \textup{ for all nonzero } \lambda \in \mathbb{C}^*}\]
We choose a different torus action for $\mathbb{P}^2$ than $\mathbb{P}^1$ in order to lay the foundations for general projective space $\mathbb{P}^n$.  However the analysis is very similar.

Take $T = \mathbb{C}^* \times \mathbb{C}^* \times \mathbb{C}^*$ and define an action of $(t_1,t_2,t_3) \in T$ on $[x_1,x_2,x_3] \in \mathbb{P}^2$ by
\[(t_1,t_2,t_3)  \cdot [x_1,x_2,x_3] = [t_1x_1,t_2x_2,t_3x_3]\]
Again the reader should pause to identify $T$-fixed points and one-dimensional orbits before continuing with the exposition.

By the same argument as above we can see that the three points $[1,0,0]$, $[0,1,0]$, and $[0,0,1]$ are all fixed by this $T$-action and in fact that they are the only  $T$-fixed points.  (If $[x_1,x_2,x_3]$ is a point with at least two nonzero entries, we can restrict attention to the two nonzero entries and literally use the previous argument to conclude that the point cannot be fixed.)

Identifying one-dimensional orbits seems more complicated for $\mathbb{P}^2$ than for $\mathbb{P}^1$.  However suppose we consider a subset of $\mathbb{P}^2$ in which exactly two entries are nonzero, for instance 
\[\{[x_1,x_2,0]: x_1, x_2 \textup{ are nonzero in } \mathbb{C}\}\]
We can divide by the nonzero coordinate $x_1$ to get
\[\{[x_1,x_2,0]: x_1, x_2 \textup{ are nonzero in } \mathbb{C}\} \sim \{[1,x,0]: x \textup{ is nonzero in } \mathbb{C}\}\]
where $x = \frac{x_2}{x_1}$.  The torus acts on each element of this subset by
\[ (t_1,t_2,t_3) \cdot [1,x,0] = [t_1,t_2x,0]\]
and by previous arguments we know that $[t_1,t_2x,0] \sim [1,\frac{t_2}{t_1}x,0]$.  In other words $T$ preserves the set $\{[x_1,x_2,0]: x_1, x_2 \in \mathbb{C}^*\}$.  Of course {\em which} two entries were nonzero was immaterial.  We conclude that $\mathbb{P}^2$ has at least three one-dimensional $T$-orbits, each obtained by choosing  two entries to be nonzero.

In fact those three are the only one-dimensional $T$-orbits.  If $x_1,x_2,x_3$ are all nonzero then 
\[[x_1,x_2,x_3] \sim [1,\frac{x_2}{x_1}, \frac{x_3}{x_1}]\]
The image under an arbitrary element of the torus is
\[(t_1,t_2,t_3) \cdot [1,\frac{x_2}{x_1}, \frac{x_3}{x_1}] = [t_1, t_2 \frac{x_2}{x_1}, t_3 \frac{x_3}{x_1}]\]
which is the same point in $\mathbb{P}^2$ as $[1, \frac{t_2}{t_1}\frac{x_2}{x_1},\frac{t_3}{t_1} \frac{x_3}{x_1}]$.  The last two coordinates can be any pair of nonzero complex numbers because $t_1$, $t_2$, and $t_3$ are all arbitrary nonzero complex numbers.  In other words if all three entries are nonzero then $[x_1,x_2,x_3]$ lies on a two-dimensional $T$-orbit.

\subsubsection{Projective space $\mathbb{P}^{n-1}$}  More generally projective space $\mathbb{P}^{n-1}$ consists of all lines through the origin in $\mathbb{C}^{n}$.  As before we can write
\[\mathbb{P}^n = \frac{\{ [x_1,x_2, \ldots, x_n] : x_1,x_2,\ldots,x_n \in \mathbb{C} \textup{ and at least one of }x_1,x_2,\ldots,x_n \textup{ is nonzero}\}}{[x_1,x_2, \ldots,x_n] \sim [\lambda x_1,\lambda x_2, \ldots,\lambda x_n] \textup{ for all nonzero } \lambda \in \mathbb{C}^*}\]
Fix the torus $T= \mathbb{C}^* \times \mathbb{C}^ \times \cdots \times \mathbb{C}^*$ with $n$ copies of $\mathbb{C}^*$ and define an action of $T$ on $\mathbb{P}^{n-1}$ by 
\[(t_1,t_2,\ldots,t_n)  \cdot [x_1,x_2,\ldots,x_n] = [t_1x_1,t_2x_2,\ldots,t_nx_n]\]
The same analysis as before shows that the $T$-fixed points of $\mathbb{P}^{n-1}$ are the $n$ coordinate points 
\[[1,0,0,\ldots,0], [0,1,0,\ldots,0],\ldots,[0,0,0,\ldots,1]\] 
in which exactly one entry is nonzero. Each one-dimensional orbit consist of points in which exactly two entries are nonzero; the set of pairs $(i,j)$ with $1 \leq i < j \leq n$ indexes the set of one-dimensional orbits, determining which entries are nonzero.  

\subsection{Moment graph}

In $\mathbb{P}^2$ we saw that every one-dimensional $T$-orbit essentially looked like the one-dimensional $T$-orbit in $\mathbb{P}^1$.  In fact the closure of each one-dimensional $T$-orbit is exactly $\mathbb{P}^1$.  This holds not just for $\mathbb{P}^n$ but for all $X$ that satisfy the GKM hypotheses.

\begin{proposition} Suppose $X$ and $T$ satisfy the GKM hypotheses and $O$ is any one-dimensional $T$-orbit $O$ in $X$.  Then the closure of $O$ is homeomorphic to $\mathbb{P}^1$ and consists of $O$ together with two $T$-fixed points $N$ and $S$ (essentially the north and south poles).  
\end{proposition}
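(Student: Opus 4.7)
My plan is to parametrize $O$ by $\mathbb{C}^*$ via a one-parameter subgroup of $T$, use compactness of $X$ to obtain limits at $0$ and $\infty$, and show those limits are two distinct $T$-fixed points that, together with $O$, fill out the closure $\overline{O}$.

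Fix $p \in O$ and let $T_p$ be its stabilizer. Because $O = T\cdot p$ has complex dimension one and $T$ is abelian, the identity component $T_p^\circ$ has codimension one in $T$. I choose a one-parameter subgroup $\phi\colon \mathbb{C}^* \to T$ whose image is complementary to $T_p^\circ$, so that $\gamma(z) := \phi(z)\cdot p$ is a continuous surjection $\mathbb{C}^* \to O$ which, after reparametrizing by a finite cover if necessary, I may take to be a homeomorphism onto $O$. Since $X$ is compact, the limits $N := \lim_{z\to 0}\gamma(z)$ and $S := \lim_{z\to\infty}\gamma(z)$ exist in $\overline{O}\subseteq X$.

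Next I would verify that $N$ and $S$ lie in $\overline{O}\setminus O$ and are both $T$-fixed. Neither limit can lie in $O$, for otherwise $\phi$ would be contained in the stabilizer of some point of $O$, contradicting that its image is complementary to $T_p^\circ$. The boundary $\overline{O}\setminus O$ is a $T$-invariant compact subset of complex dimension strictly less than one, hence zero; any $T$-orbit contained in it is therefore a single fixed point. Extending $\gamma$ by $0\mapsto N$ and $\infty\mapsto S$ produces a continuous map $\widetilde{\gamma}\colon \mathbb{P}^1\to\overline{O}$ that is a bijection provided $N\neq S$.

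The main obstacle is showing $N\neq S$: a priori the two ends of the $\mathbb{C}^*$-parametrization could collapse into the same fixed point, producing a nodal curve in place of $\mathbb{P}^1$. To rule this out I would invoke a local slice theorem for smooth torus actions (Sumihiro's theorem or its analytic analogue): near any $T$-fixed point the $T$-action is equivariantly modeled on a linear representation of $T$ on the tangent space, and inside such a linear model the one-parameter subgroup $\phi$ contracts any given weight line to the origin as $z\to 0$ or as $z\to\infty$, but not both. Applied at $N$, this forces exactly one end of $\gamma$ to converge to $N$, so the other end converges to a distinct fixed point $S$. Once $N\neq S$ is established, $\widetilde{\gamma}$ is a continuous bijection from the compact space $\mathbb{P}^1$ onto the Hausdorff space $\overline{O}$, and therefore a homeomorphism.
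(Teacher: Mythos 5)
The paper states this proposition without proof (it is quoted as part of the standard GKM package from \cite{GKM98}), so there is nothing to compare against line by line; your argument is essentially the standard one, and it is correct in outline. Two remarks. First, the one genuine soft spot is the sentence ``Since $X$ is compact, the limits $\lim_{z\to 0}\gamma(z)$ and $\lim_{z\to\infty}\gamma(z)$ exist'': compactness of $X$ only gives accumulation points of $\gamma(z)$, not limits, and a merely continuous $\mathbb{C}^*$-flow on a compact space can fail to converge. You need the holomorphic/algebraic structure here --- e.g.\ that $\overline{O}$ is an irreducible $T$-stable algebraic curve so that the orbit map extends over $0$ and $\infty$ by the valuative criterion of properness, or equivalently by passing to the normalization of $\overline{O}$, which is a smooth compact curve with a $\mathbb{C}^*$-action having a dense orbit and hence is $\mathbb{P}^1$. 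Second, your treatment of the real crux, $N\neq S$, is right and is where smoothness of $X$ enters: in a $T$-invariant linearized neighborhood of $N$ the one-parameter subgroup $\phi$ scales the coordinates of a point of $O$ by $z^{a_i}$, and convergence to the origin as $z\to 0$ forces all relevant $a_i>0$ while convergence as $z\to\infty$ forces all $a_i<0$, so only one end of $\gamma$ can reach $N$. (This is exactly the hypothesis that fails for, say, a nodal rational curve with a $\mathbb{C}^*$-action, where both ends of the open orbit collapse to the node --- so some smoothness or normality assumption is genuinely needed, and your proof correctly uses it.) The remaining steps --- the limits are $\phi$-fixed, hence not in $O$ by the complementarity of $\phi$ with $T_p^\circ$, hence full $T$-fixed points since the boundary is a zero-dimensional union of orbits of a connected group, and a continuous bijection from the compact $\mathbb{P}^1$ to the Hausdorff $\overline{O}$ is a homeomorphism --- are all sound.
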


This means we can record the data of $T$-fixed points and one-dimensional orbits more concisely: as a graph. 

\begin{definition} 
Suppose $X$ and $T$ satisfy the GKM hypotheses.  The {\em moment graph} or {\em GKM graph} of $X$ is defined from the $T$-orbits of $X$ as follows:
\begin{center}
\begin{tabular}{lcl}
$T$-fixed points & $\leftrightarrow$ & vertices \\
$1$-dimensional $T$-orbits & $\leftrightarrow$ & edges \\
weight of $T$-action & $\leftrightarrow$ & label on edge
\end{tabular}
\end{center}
\end{definition}

(Section \ref{subsection: GKM theorem and discussion} explains how the {\em moment} gets into moment graph.)

The weight of the $T$-action essentially records the direction of the $T$-action.  For instance when studying the one-dimensional $T$-orbits in $\mathbb{P}^2$ we found that
\[ (t_1,t_2,t_3) \cdot [1,x,0] \sim [1, \frac{t_2}{t_1}x,0]\]
In other words the torus pushed the point $[1,x,0]$ in the $\frac{t_2}{t_1}$ direction.  We typically linearize this and (with a slight abuse of notation) record the weight as $t_2-t_1$.  Figure \ref{figure: moment graph examples} gives the moment graphs for $\mathbb{P}^1$ and $\mathbb{P}^2$.  In the moment graph for $\mathbb{P}^{n-1}$ the edge indexed by $(i,j)$ for $1 \leq i < j \leq n$ is labeled $t_j - t_i$.

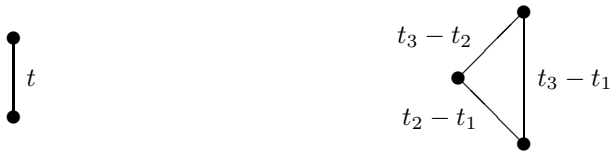
\begin{figure}[h]
\begin{picture}(10,50)(0,-10)
\put(0,0){\circle*{5}}
\put(0,30){\circle*{5}}
\put(0,0){\line(0,1){30}}
\put(5,12){$t$}
\end{picture}
\hspace{1.75in}
\begin{picture}(50,50)(-50,0)
\put(0,0){\circle*{5}}
\put(0,50){\circle*{5}}
\put(-25,25){\circle*{5}}

\put(0,0){\line(0,1){50}}
\put(0,0){\line(-1,1){25}}
\put(-25,25){\line(1,1){25}}

\put(5,22){$t_3-t_1$}
\put(-46,7){$t_2-t_1$}
\put(-48,38){$t_3-t_2$}
\end{picture}
\caption{Examples of moment graphs for $\mathbb{P}^1$ and $\mathbb{P}^2$} \label{figure: moment graph examples}
\end{figure}

\subsection{GKM theorem}\label{subsection: GKM theorem and discussion}

We can now give Goresky-Kottwitz-MacPherson's theorem \cite{GKM98}.

\begin{theorem}[Goresky-Kottwitz-MacPherson]
Let $X$ be a compact complex manifold with the action of a torus $T$ that satisfies the GKM hypotheses.  Let $G_X$ be its moment graph.  Then the equivariant cohomology ring $H^*_T(X; \mathbb{C})$ satisfies the ring isomorphism
\[H^*_T(X; \mathbb{C}) \cong \{p \in \mathbb{C}[t_1,\ldots,t_n]^{|V|}: \textup{ for each edge } uv \textup{ the difference }p_u - p_v \textup{ is in } \alpha(uv)\}\] 
\end{theorem}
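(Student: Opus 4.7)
The plan is to prove the theorem in three main steps: an injection from $H^*_T(X;\mathbb{C})$ into the tuple ring via localization at the fixed points, a reduction of the image to data supported on the one-skeleton via the Chang--Skjelbred lemma, and an explicit local computation on each $\mathbb{P}^1$ orbit closure that produces the edge-divisibility condition.

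First I would invoke the Atiyah--Bott--Borel Localization Theorem. Equivariant formality guarantees that $H^*_T(X;\mathbb{C})$ is a free module over $H^*_T(\textup{pt};\mathbb{C}) \cong \mathbb{C}[t_1,\ldots,t_n]$, and combined with the finiteness of $X^T$ this upgrades the usual localization statement to an honest injection of rings
\[
i^*\colon H^*_T(X;\mathbb{C}) \hookrightarrow H^*_T(X^T;\mathbb{C}) \;=\; \bigoplus_{v \in X^T}\mathbb{C}[t_1,\ldots,t_n].
\]
Identifying the right-hand side with $\mathbb{C}[t_1,\ldots,t_n]^{|V|}$ presents every equivariant class as a tuple $p=(p_v)_{v\in V}$, which is half of the claimed description.

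Next I would apply the Chang--Skjelbred lemma: for equivariantly formal $X$, the image of $i^*$ coincides with the image of the restriction $H^*_T(X_1;\mathbb{C}) \to H^*_T(X^T;\mathbb{C})$, where $X_1$ is the one-skeleton consisting of fixed points together with the closures of all one-dimensional $T$-orbits. Thus the task reduces to computing $H^*_T(X_1;\mathbb{C})$ and its image at the fixed points. Because the Proposition tells us each orbit closure $C$ is a copy of $\mathbb{P}^1$ joining two fixed points $u,v$, the space $X_1$ is a union of such $\mathbb{P}^1$'s meeting only at fixed points, and a Mayer--Vietoris argument over this stratification computes $H^*_T(X_1;\mathbb{C})$ as the fiber product of the rings $H^*_T(C;\mathbb{C})$ over $H^*_T(X^T;\mathbb{C})$.

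Finally I would carry out the local model on a single $\mathbb{P}^1$ with weight $\alpha(uv)$. Using the standard cover of $\mathbb{P}^1$ by two $T$-invariant affine charts centered at $u$ and $v$, an equivariant Mayer--Vietoris (or equivalently the Borel construction description of $H^*_T(\mathbb{P}^1;\mathbb{C})$) yields
\[
H^*_T(\mathbb{P}^1;\mathbb{C}) \;\cong\; \bigl\{(f_u,f_v)\in \mathbb{C}[t_1,\ldots,t_n]^2 : f_u - f_v \in \alpha(uv)\cdot \mathbb{C}[t_1,\ldots,t_n]\bigr\}.
\]
Gluing these local descriptions across the moment graph delivers exactly the GKM ring, and compatibility of the cup product with the restriction $i^*$ makes the isomorphism one of rings.

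The main obstacle I expect is the Chang--Skjelbred step: justifying that the higher-dimensional orbits contribute no new relations on $i^*(H^*_T(X;\mathbb{C}))$. This requires the long exact sequence of the filtration by orbit dimension, together with the vanishing of certain odd-degree equivariant cohomology groups of the successive quotients, and it is here that the equivariant formality hypothesis does its essential work. Once this is established, the $\mathbb{P}^1$ computation and the gluing are essentially formal.
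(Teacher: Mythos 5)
Your outline is correct and matches the argument the paper itself gestures at (it cites \cite{GKM98} rather than proving the theorem, but its discussion identifies exactly your ingredients: the restriction $\iota^*$ to the fixed-point set, Chang--Skjelbred's description of its image via the one-skeleton, and the simplification under the GKM hypotheses). The injectivity-from-equivariant-formality step, the reduction to the one-dimensional orbit closures, and the local computation of $H^*_T(\mathbb{P}^1;\mathbb{C})$ giving the divisibility condition $f_u - f_v \in \langle \alpha(uv)\rangle$ are precisely the standard proof.
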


From the perspective of splines, the point is the following:

\begin{corollary}
Suppose $X$ is a compact complex manifold with a torus action that satisfies the GKM hypotheses.  The equivariant cohomology of $X$ is isomorphic to the ring of complex splines $S^0_\infty(G_X)$.
\end{corollary}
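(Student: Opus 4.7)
The plan is to match definitions on the two sides and invoke the GKM theorem directly; this corollary is essentially a translation of the theorem into the language of splines. First I would pin down the definition of the spline ring $S^0_\infty(G_X)$ in this setting. An element is an assignment of a polynomial $p_v \in \mathbb{C}[t_1,\ldots,t_n]$ to each vertex $v$ of $G_X$ such that whenever $u$ and $v$ are joined by an edge, the difference $p_u - p_v$ is divisible by the linear form $\alpha(uv)$ labeling that edge; the subscript $\infty$ records that no degree bound is imposed on the polynomials, and the superscript $0$ encodes the weakest possible congruence condition on adjacent vertex polynomials. Addition and multiplication are componentwise, making $S^0_\infty(G_X)$ a commutative subring of $\prod_v \mathbb{C}[t_1,\ldots,t_n]$.

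Next I would read off the right hand side of the Goresky-Kottwitz-MacPherson theorem and observe that it is verbatim this ring: tuples $(p_v) \in \mathbb{C}[t_1,\ldots,t_n]^{|V|}$ satisfying $p_u - p_v \in \alpha(uv)$ for every edge $uv$ of $G_X$. The underlying sets, and their $\mathbb{C}[t_1,\ldots,t_n]$-module structures, agree on the nose. The only point that requires more than definition-chasing is verifying that the ring structures match, since the spline ring uses coordinatewise multiplication. This is in fact built into the theorem: the isomorphism is induced by the restriction map $H^*_T(X;\mathbb{C}) \to \prod_v H^*_T(\{v\};\mathbb{C})$ to the $T$-fixed points, and restriction is a ring homomorphism at each individual fixed point, so products on the tuple side really are componentwise and thus match spline multiplication.

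The main obstacle here is conceptual rather than computational: one has to recognize that the ``GKM side'' of the theorem already \emph{is} the geometric spline ring, with the edge labels $\alpha(uv)$ of the moment graph playing precisely the role of the linear forms that cut out the smoothness conditions on an analytic spline. Once that dictionary is in place, the corollary is immediate from the theorem.
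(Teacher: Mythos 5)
Your proposal is correct and matches the paper's treatment: the paper offers no separate proof, treating the corollary as an immediate restatement of the Goresky--Kottwitz--MacPherson theorem once one observes that the right-hand side of that theorem is, verbatim, the ring of $C^0$ splines with no degree bound on the edge-labeled graph $G_X$. Your additional remark that the isomorphism is induced by restriction to the fixed points (so that multiplication really is componentwise) is exactly the point the paper makes in its discussion of the map $\iota^*: H^*_T(X) \rightarrow \mathbb{C}[t_1,\ldots,t_n]^{|V|}$.
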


What precisely does this mean?  Classically the splines $S^r_d(\Delta)$ consist of $\mathcal{C}^r$ piecewise polynomials of degree at most $d$ on the combinatorial object $\Delta$.  We take $\Delta$ to be a pure polyhedral complex whose maximal faces are all $d$-dimensional polytopes in $\mathbb{R}^d$ and use the example of a triangulation of a region of the plane. 

The graph $G_X$ is the dual graph to $\Delta$, namely $G_X$ has one vertex for each $d$-dimensional face and an edge between two vertices if the corresponding faces intersect in a $d-1$-dimensional face.  For instance if $\Delta$ is a triangulation then $G_X$ is the classical dual graph: it has one vertex per triangle and an edge between vertices if the corresponding triangles share an edge.  

We can label the edges of $G_X$ as well.  If $\sigma_1 \cap \sigma_2$ is a $d-1$-dimensional face in $\Delta$ then the affine span of $\sigma_1 \cap \sigma_2$ is a hyperplane.   Label the edge corresponding to $\sigma_1 \cap \sigma_2$ in $G_X$ with the affine form that vanishes on that hyperplane.  For instance when $\Delta$ is a triangulation the label on each edge of $G_X$ is essentially the slope of the line between the corresponding triangles.  Section \ref{subsection: Alfeld split} shows an example in detail.  

Billera first used dual graphs in the context of splines, proving that the ring of splines on the dual graph is isomorphic to $S^r(\Delta)$ when $\Delta$ is a {\em hereditary} pure polyhedral complex \cite{Bil88}.  (Hereditary complexes are essentially those without holes or pinchpoints; for instance any triangulation of a convex region in $\mathbb{R}^2$ is hereditary.  For details see e.g. \cite{BilRos91, BilRos92}.)

The GKM perspective uses graphs because of the historical evolution of the result.  Pieces of Goresky-Kottwitz-MacPherson's approach predate them significantly.  The $T$-orbits on $X$ had been studied extensively in different contexts: when $X$ is a symplectic manifold with a Hamiltonian $T$-action there's a natural map $\mu: X \rightarrow \mathbb{R}^n$ called {\em the moment map} that linearizes the action of the torus, ``straightening out" the torus orbits.  Kostant noticed that the image of the moment map for one important manifold was a convex polytope \cite{Kos70}, leading to work by Atiyah, Guillemin-Sternberg, and Heckman that collectively proved that the image of the moment map is a union of convex polytopes \cite{Ati82, GuiSte82, Hec82}.  In fact the points and $1$-dimensional faces of the moment map form precisely the moment graph that arises in GKM theory.  

Another crucial step of the GKM proof is to construct a map from the equivariant cohomology $H^*_T(X)$ into a product of polynomial rings.  It turns out that the equivariant cohomology of a point is simply $\mathbb{C}[t_1,\ldots,t_n]$ and in fact the natural inclusion $\iota: X^T \hookrightarrow X$ of $T$-fixed points into $X$ induces the desired map on cohomology $\iota^*: H^*_T(X) \rightarrow \mathbb{C}[t_1,\ldots,t_n]^{|V|}$.  This map plays an especially important role in symplectic geometry and topology, in the work of Kirwan \cite{Kir84}, among others.  Chang-Skjelbred analyzed the image of this map in very general settings, where the image is determined by a complicated spectral sequence \cite{ChaSkj74}.  One of Goresky-Kottwitz-MacPherson's deep contributions is to identify a package of GKM hypotheses for which the general picture simplifies so dramatically.

At the same time, some work on equivariant cohomology leads to the classical spline perspective, especially when $X$ is a toric variety. Each toric variety $X$ is associated to a polytope, making it more natural to consider piecewise polynomials on the faces of the polytope.  This is what Payne \cite{Pay06} and Bahri-Franz-Ray \cite{BFR09} do.  

I have not found indication in the literature that any of these geometers or topologists were aware that they had recreated the classical notion of splines.  Schenck appears to be the first person to work in both splines and equivariant cohomology \cite{Sch12}.

\begin{remark}[Singularities]
For simplicity we assumed that $X$ is a manifold.  In fact GKM theory applies to many singular varieties as well, including Schubert varieties (see Sections \ref{subsection: Grassmannians} and \ref{subsection: flag varieties} below).  Geometers often find this surprising, but the GKM hypotheses reduce the calculation of equivariant cohomology to an analysis of zero- and one-dimensional orbits, where geometric singularities don't significantly arise.

Singularities in the sense of analytic splines correspond to faces like the subgraph shown in Figure \ref{figure: singular spline}.  
\begin{figure}[h]
\begin{picture}(50,50)(-25,-25)
\put(-25,25){\circle*{5}}
\put(25,25){\circle*{5}}
\put(0,0){\circle*{5}}
\put(0,50){\circle*{5}}

\put(0,0){\line(1,1){25}}
\put(-25,25){\line(1,1){25}}
\put(0,0){\line(-1,1){25}}
\put(25,25){\line(-1,1){25}}

\put(-25,5){$\alpha$}
\put(14,40){$\alpha$}
\put(15,5){$\beta$}
\put(-23,38){$\beta$}
\end{picture}
\hspace{1in}
\begin{picture}(100,100)(-50,-25)
\put(-25,25){\circle*{5}}
\put(25,25){\circle*{5}}
\put(0,0){\circle*{5}}
\put(0,50){\circle*{5}}

\put(0,0){\line(1,1){25}}
\put(-25,25){\line(1,1){25}}
\put(0,0){\line(-1,1){25}}
\put(25,25){\line(-1,1){25}}

\put(-50,-25){\circle*{5}}
\put(50,75){\circle*{5}}
\multiput(-49,-24)(4,2){12}{\circle*{1}}
\multiput(-49,-24)(2,4){12}{\circle*{1}}
\multiput(-49,-24)(4,6){13}{\circle*{1}}
\multiput(-49,-24)(6,4){13}{\circle*{1}}

\multiput(49,74)(-4,-2){12}{\circle*{1}}
\multiput(48,73.5)(-4,-2){12}{\circle*{1}}
\multiput(47,73)(-4,-2){11}{\circle*{1}}

\multiput(49,74)(-2,-4){12}{\circle*{1}}
\multiput(48.5,73)(-2,-4){12}{\circle*{1}}
\multiput(48,72)(-2,-4){11}{\circle*{1}}

\multiput(49,74)(-4,-6){13}{\circle*{1}}
\multiput(48.3,73)(-4,-6){13}{\circle*{1}}
\multiput(47.6,72)(-4,-6){13}{\circle*{1}}

\multiput(49,74)(-6,-4){13}{\circle*{1}}
\multiput(48,73.3)(-6,-4){13}{\circle*{1}}
\multiput(47,72.6)(-6,-4){13}{\circle*{1}}

\put(-21,7){\small $\alpha$}
\put(13,38){\small $\alpha$}
\put(13,6){\small $\beta$}
\put(-21,36){\small $\beta$}
\end{picture}
\caption{The subgraph corresponding to a singular vertex and the moment graph for the Grassmannian $G(2,4)$}\label{figure: singular spline}
\end{figure}
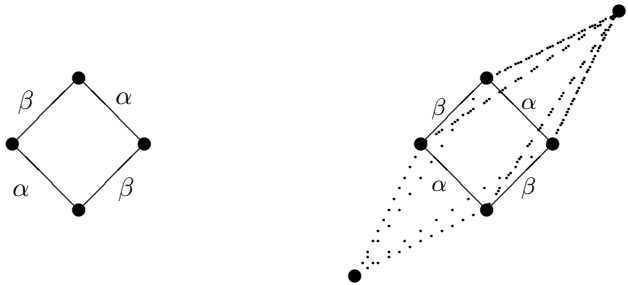
They have no relationship to geometric singularities of the variety $X$.  Singular faces appear frequently in moment graphs; for instance Figure \ref{figure: singular spline} also shows the moment graph of the Grassmannian $G(2,4)$.  Section \ref{subsection: Grassmannians} describes the Grassmannian in more detail; the edge-labels of the moment graph of $G(2,4)$ can be found in many other sources \cite{KnuTao03, Tym05}.
\end{remark}

The difference in perspective between students of analytic splines and what we will call geometric splines affects the kinds of data we assume and the kinds of questions we ask about splines.
\begin{itemize}
\item Geometers typically consider the splines $H^*_T(X,\mathbb{C})$ {\em as modules} over the polynomial ring $\mathbb{C}[t_1,\ldots,t_n]$ rather than vector spaces because the GKM construction does not naturally impose any degree constraints on splines.  In addition the GKM hypotheses guarantee that $H^*_T(X,\mathbb{C})$ is a free module over the polynomial ring $\mathbb{C}[t_1,\ldots,t_n]$.  (In some contexts this free-ness is taken as the definition of equivariant formality.)
\item Geometers typically know the dimension of the splines $H^*_T(X,\mathbb{C})$ as a module over the polynomial ring for geometric reasons.  
\item Geometers, like analysts, want explicit formulas for bases for the space of splines.  In many cases the torus action can be used to construct geometric bases or to suggest preferred forms for bases.  In other cases a group action on the spline module can be used to construct representation-theoretic bases.  But these constructions guarantee {\em existence} of a basis or set of splines rather than concrete descriptions that can be used in calculations.  Section \ref{subsection: flag varieties} says more about bases when $X$ is a flag variety and Section \ref{section: tools} discusses bases in general.
\item Geometers care about the multiplicative structure of the ring of splines.  In particular geometers want explicit multiplication tables in terms of a basis for the splines.  Section \ref{section: examples} describes some context behind this problem in the case of ordinary and equivariant cohomology of specific families of manifolds $X$.
\item Geometers focus on $S^0_{\infty}(G_X)$ since these are the splines isomorphic to $H^*_T(X; \mathbb{C})$.  Billera asked what kind of geometric meaning the rings $S^0_d(G_X)$ carry as $d$ increases. 
\end{itemize}
In the rest of this survey we give important geometric examples of splines and elaborate on these points.

\section{Important examples for geometers/topologists}\label{section: examples}

This section describes some of the most important examples for geometers and topologists.  Some are already used in analytic splines; projective space turns out to be essentially the Alfeld split.  We present others both to expand on the  questions that concluded Section \ref{section: GKM theory} and to suggest tools that may be useful within analytic splines.

\subsection{Projective spaces}\label{subsection: Alfeld split}  We described projective space extensively in Section \ref{subsection: projective space moment graph}.  The moment graph of projective space is better-known within analytic splines as the Alfeld split \cite{Alf84} or sometimes the Clough-Tocher split (but see the comment in Kolesnikov-Sorokina \cite[Section 2]{KolSor14}).  We are particularly grateful to Alexei Kolesnikov and Gwyneth Whieldon for pointing this out.

We describe the Alfeld split following Kolesnikov-Sorokina \cite{KolSor14} and Schenck \cite{Sch14}.  Let  $T = [v_1,\ldots,v_{n+1}]$ be an $n$-simplex in $\mathbb{R}^n$.  Given a central interior vertex $v_0$ the Alfeld split is defined by taking the cone over the boundary.   Without loss of generality we can assume that $v_0$ is the origin, each $v_i$ is the standard basis vector $e_i$, and $v_{n+1}=-\sum_{i=1}^ne_i$.  Then there are $n+1$ facets $F_1,F_2,\ldots,F_{n+1}$ of the Alfeld split, each containing all but one of the vertices $v_0, v_1, \ldots, v_{n+1}$.  Specifically for each $i=1,\ldots,n+1$ the $i^{th}$ facet is the face that does not contain $v_i$.  The intersection of two facets $F_i$ and $F_j$ lies on the hyperplane defined by 
\[F_i \cap F_j = \left\{ \begin{array}{ll}
x_i-x_j=0 &\textup{ if } 1 \leq i < j \leq n \\
x_i=0 &\textup{ if } 1 \leq i < n+1 \textup{ and } j = n+1
\end{array} \right. \]
Thus the dual graph of the Alfeld split is the complete graph on $n+1$ vertices indexed without loss of generality by $\{F_1,F_2,\ldots,F_{n+1}\}$ with edge $F_i \leftrightarrow F_j$ labeled as above.

\begin{proposition}
The ring of $C^0$ splines on the moment graph of projective space $\mathbb{P}^n$ is isomorphic to the ring of splines on the Alfeld split $AS^n$. 
\end{proposition}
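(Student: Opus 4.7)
My plan is to exhibit an isomorphism of edge-labeled graphs between the moment graph of $\mathbb{P}^n$ and the dual graph of the Alfeld split $AS^n$, then invoke the fact that the spline ring on a labeled graph depends only on the vertex set, edge set, and ideals generated by the edge labels. The underlying graphs coincide: Section~\ref{subsection: projective space moment graph} shows that $\mathbb{P}^n$ has $n+1$ torus-fixed coordinate points $[e_1], \ldots, [e_{n+1}]$ joined pairwise by one-dimensional orbits, while by construction $AS^n$ has $n+1$ facets $F_1, \ldots, F_{n+1}$ that pairwise share a codimension-one face. So both dual graphs are the complete graph $K_{n+1}$, and I would use the vertex bijection $F_i \leftrightarrow [e_i]$.

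To match edge labels, I would introduce the change of variables $\phi \colon \mathbb{C}[x_1, \ldots, x_n] \to \mathbb{C}[t_1, \ldots, t_{n+1}]$ defined by $\phi(x_i) = t_i - t_{n+1}$. Under $\phi$ the Alfeld label $x_i - x_j$ on the edge $\{F_i, F_j\}$ with $i < j \leq n$ maps to $t_i - t_j$, and the Alfeld label $x_i$ on the edge $\{F_i, F_{n+1}\}$ maps to $t_i - t_{n+1}$; in each case this agrees, up to a sign that does not affect the principal ideal, with the moment-graph label on the corresponding edge. Thus $\phi$ matches the edge-label ideals of $AS^n$ with those of the moment graph of $\mathbb{P}^n$ under the chosen vertex bijection, and applying $\phi$ coordinate-wise carries the divisibility conditions defining $S^0_\infty(AS^n)$ to the conditions defining $S^0_\infty(G_{\mathbb{P}^n})$.

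The main obstacle is a mismatch between the ambient polynomial rings: $\phi$ embeds $\mathbb{C}[x_1, \ldots, x_n]$ as the subring $\mathbb{C}[t_1 - t_{n+1}, \ldots, t_n - t_{n+1}]$ of $\mathbb{C}[t_1, \ldots, t_{n+1}]$, so the literal spline ring on the moment graph of $\mathbb{P}^n$ has an extra polynomial variable. This reflects the geometric fact that the diagonal subtorus $\mathbb{C}^* \subset T$ acts trivially on $\mathbb{P}^n$, contributing a free factor $\mathbb{C}[t_{n+1}]$ to $H^*_T(\mathbb{P}^n)$ that has no analogue on the Alfeld side. To promote the inclusion above to an isomorphism, I would normalize by passing to the effective rank-$n$ torus on the $\mathbb{P}^n$ side, for instance by setting $t_{n+1} = 0$ (equivalently, working modulo the ideal $(t_{n+1})$). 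Once this normalization is made, $\phi$ becomes an isomorphism of ambient polynomial rings, the edge-label matching extends to an isomorphism of spline rings, and the proposition follows.
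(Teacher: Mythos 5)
Your proposal is correct and follows essentially the same route as the paper: both exhibit a linear change of variables identifying the edge-labeled complete graph $K_{n+1}$ of the Alfeld split with the moment graph of $\mathbb{P}^n$ (the paper sends $\alpha_i = t_{i+1}-t_i \mapsto x_i - x_{i+1}$ for $i<n$ and $\alpha_n \mapsto x_n$, which is your substitution inverted, up to sign) and then invokes Billera's theorem that the spline ring depends only on the edge-labeled dual graph. Your explicit normalization killing the diagonal variable $t_{n+1}$ addresses a genuine rank mismatch that the paper handles implicitly by working over the subring $\mathbb{R}[\alpha_1,\ldots,\alpha_n]$ generated by the edge labels from the outset.
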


\begin{proof}
For each $i=1,\ldots,n$ let $\alpha_i = t_{i+1}-t_i$.  Define a map 
\[\varphi: \mathbb{R}[\alpha_1, \alpha_2,\ldots,\alpha_n] \rightarrow  \mathbb{R}[x_1, x_2,\ldots,x_n]\] on the generators $\alpha_1, \ldots, \alpha_n$ by
\[\varphi(\alpha_i) = x_{i}-x_{i+1} \textup{   if   } i \in \{1,2,\ldots,n-1\} \]
and
\[\varphi(\alpha_n) = x_{n}\]
The map $\varphi$ extends to a ring homomorphism.  It is surjective because for each $i=1,\ldots,n-1$ we have
\[x_i = \varphi(\alpha_i + \alpha_{i+1} + \cdots + \alpha_n)\]
This also shows that the map $\varphi$ sends the moment graph of $\mathbb{P}^n$ to the dual graph of the Alfeld split, in the sense that it sends each edge-label to the corresponding edge-label.  

The map $\varphi$ is injective because $\{x_1-x_2,x_2-x_3,\ldots,x_{n-1}-x_n,x_n\}$ are algebraically independent.  Indeed a polynomial $p(\alpha_1,\ldots,\alpha_n)$ is in the kernel of $\varphi$ only if $\varphi(p) = 0$.  But $\varphi(p) = p(x_1-x_2,\ldots,x_n)$ is zero only if $p$ is identically zero.

In other words $\varphi$ is an isomorphism of polynomial rings that sends the moment graph of $\mathbb{P}^n$ to that of the dual graph to the Alfeld split.  It follows from Billera's result \cite{Bil88} that the ring of splines on the moment graph of $\mathbb{P}^n$ is isomorphic to the ring of splines on the Alfeld split, as desired.
\end{proof}

\subsection{Grassmannians}\label{subsection: Grassmannians} The Grassmannian is a natural generalization of projective space in which we consider higher-dimensional subspaces of $\mathbb{C}^n$ than lines.  More precisely the Grassmannian $G(k,n)$ consists of the set of $k$-dimensional vector subspaces of $\mathbb{C}^n$.  Note that $G(1,n)$ is another way to write the projective space $\mathbb{P}^{n-1}$.  

Grassmannians occur naturally in different geometric contexts.  For instance the tangent bundle of a smooth $k$-dimensional manifold $M \subseteq \mathbb{C}^n$ can be thought of as a map from $M$ to the Grassmannian $G(k,n)$.  Explicitly each point $p \in M$ is mapped to the $k$-dimensional subspace obtained by translating the tangent space to $M$ at $p$ to the origin.

The cohomology ring of the Grassmannian is particularly interesting. Just like we identified the one-dimensional orbits of $\mathbb{P}^2$ by choosing which entries were nonzero, we can partition the Grassmannian into a natural collection of subspaces by choosing which entries in the $k$-dimensional planes are pivots.  The sets of pivots are indexed by partitions of $n$ and the collection of subspaces corresponding to the partition $\lambda$ is called a {\em Schubert variety}.  The Schubert varieties induce a basis of {\em Schubert classes} $\sigma_\lambda$ for the cohomology ring $H^*(G(k,n), \mathbb{C})$.  Since they are a basis, we can write the products $\sigma_{\lambda} \cdot \sigma_{\mu}$ again in terms of the basis of Schubert classes:
\[\sigma_{\lambda} \cdot \sigma_{\mu} = \sum_{\nu} c_{\lambda, \mu}^{\nu} \sigma_{\nu}\]
The entries $c_{\lambda, \mu}^{\nu}$ in this multiplication table have remarkable properties.
\begin{itemize}
\item The entries $c_{\lambda, \mu}^{\nu}$ count the intersections of linear spaces in a vector space.  The $19^{th}$-century mathematician Hermann Schubert, a pioneer of enumerative geometry, computed these intersections by hand.  His name is now given to {\em Schubert calculus}, which is more generally considered the study of the cohomology ring of the Grassmannian and related spaces.
\item The entries $c_{\lambda, \mu}^{\nu}$ also describe the ring of symmetric polynomials.  A polynomial is a {\em symmetric polynomial} if it is invariant under all permutations of the variables.  One main direction of research in algebraic combinatorics studies the ring of symmetric polynomials with respect to different bases, including an important basis of {\em Schur functions} $s_{\lambda}$.  It turns out that writing a product of Schur functions in terms of Schur functions gives
$s_{\lambda} \cdot s_{\mu} = \sum_{\nu} c_{\lambda, \mu}^{\nu} s_{\nu}$ for the very same $c_{\lambda, \mu}^{\nu}$  that appear in the cohomology of the Grassmannian.
\item The entries $c_{\lambda, \mu}^{\nu}$ also describe the representations of the group $GL_n(\mathbb{C})$ of $n \times n$ invertible matrices.  The irreducible representations $V_{\lambda}$ of $GL_n(\mathbb{C})$ are indexed by partitions, just like Schubert classes and Schur functions.  The tensor product $V_{\lambda} \otimes V_{\mu}$ of the vector space $V_{\lambda}$ and $V_{\mu}$ can  be decomposed into irreducible representations $V_{\lambda} \otimes V_{\mu} = \bigoplus_{\nu} c_{\lambda, \mu}^{\nu} V_{\nu}$.  The tensor product multiplicities also turn out to be precisely the coefficients in the multiplication table of $H^*(G(k,n), \mathbb{C})$.
\end{itemize}

The structure constants $c_{\lambda, \mu}^{\nu}$ for $H^*(G(k,n), \mathbb{C})$ are well-understood.  The classical combinatorial formula for $c_{\lambda, \mu}^{\nu}$ in terms of the underlying partitions $\lambda$, $\mu$, and $\nu$ was discovered by Littlewood-Richardson.   More recently Knutson-Tao discovered a combinatorial formula for the equivariant structure constants $c_{\lambda, \mu}^{\nu}$ for $H^*_T(G(k,n), \mathbb{C})$ using splines (in the form of GKM theory) \cite{KnuTao03}.  Their proof has three steps: 1) use splines to determine the base case of a recurrence, 2) prove that any set of numbers that satisfy the base recurrence are the same; and then 3) miraculously identify a second formula that satisfies the same recurrence and thus is the same as $c_{\lambda, \mu}^{\nu}$.  A particularly interesting element of the proof is that while splines play a key role, it is in some sense an indirect role.  Geometrically splines have proven very difficult to calculate.

\subsection{Flag variety}\label{subsection: flag varieties} Flag varieties form another important family of spaces that satisfy the GKM hypotheses.  Consider the general linear group $GL_n(\mathbb{C})$, namely the collection of $n \times n$ invertible matrices.  The subgroup of upper-triangular invertible matrices is called the Borel subgroup and typically denoted $B$.  The flag variety is the quotient $GL_n(\mathbb{C})/B$.

On the one hand a flag is a coset $gB$.  On the other hand a flag $V_{\bullet}$ can be realized geometrically as a nested collection of subspaces 
\[V_1 \subsetneq V_2 \subsetneq V_3 \subsetneq \cdots \subsetneq V_{n-1} \subsetneq \mathbb{C}^n\]  To see that these two descriptions are equivalent, write the columns of $g$ as vectors $v_1, v_2, \ldots, v_n$ where $\langle v_1, \ldots, v_i \rangle = V_i$.  The different matrix representatives for a given flag $V_{\bullet}$ are exactly the matrices in the coset $gB$.  Indeed the matrices in $gB$ contain all possible nonzero vectors in $V_1$ as their first column; their second column contains all vectors in $V_2$ that are linearly independent from $V_1$, and so on.

The moment graph of the flag variety is defined as follows:
\begin{itemize}
\item The vertices are the permutations $S_n$.
\item There is an edge between each pair of permutations $w \leftrightarrow (ij)w$ that are related by multiplication by a transposition.  (Here $(ij)$ indicates the permutation that exchanges $i$ and $j$ and leaves all other numbers fixed.)
\item The label on the edge $w \leftrightarrow (ij)w$ is $t_j-t_i$.  (Typically we assume $j>i$.)
\end{itemize}
For instance Figure \ref{figure: moment graph GL_3/B} shows the moment graph of the flag variety associated to $GL_3(\mathbb{C})$.
\begin{figure}[h]
\begin{picture}(75,100)(-35,0)
\put(0,15){\circle*{5}}
\put(-30,30){\circle*{5}}
\put(30,30){\circle*{5}}
\put(-30,60){\circle*{5}}
\put(30,60){\circle*{5}}
\put(0,75){\circle*{5}}

\put(0,15){\line(-2,1){30}}
\put(30,30){\line(-2,1){60}}
\put(30,60){\line(-2,1){30}}

\put(0,15){\line(2,1){30}}
\put(-30,30){\line(2,1){60}}
\put(-30,60){\line(2,1){30}}

\put(0,15){\line(0,1){60}}
\put(-30,30){\line(0,1){30}}
\put(30,30){\line(0,1){30}}

\put(-200,75){\bf Edge labels}
\put(-200,55){Negative slope: $t_2-t_1$}
\put(-200,40){Positive slope: $t_3-t_2$}
\put(-200,25){Vertical: $t_3-t_1$}

\put(75,75){\bf Permutations at vertices:}
\put(75,55){Top and bottom: $(13)$ and $e$}
\put(75,40){Leftmost: $(12)(23)$ and $(12)$}
\put(75,25){Rightmost: $(23)(12)$ and $(23)$}
\end{picture}
\caption{The moment graph of $GL_3(\mathbb{C})/B$} \label{figure: moment graph GL_3/B}
\end{figure}
The moment graph of the flag variety is also important in algebraic combinatorics, where it is called the Bruhat graph.

There are many symmetries in the moment graph of the flag variety and in the equivariant cohomology of the flag variety.  For instance, the permutation group $S_n$ acts on the vertices $S_n$ either by left- or right-multiplication.  At the same time, the group $S_n$ also acts by permuting the variables $t_1, t_2,\ldots, t_n$.  Some of these actions extend to $S_n$ actions on the splines, too, as we discuss in Section \ref{section: tools}.

The careful reader may also notice certain similarities between the moment graph of $\mathbb{P}^2$ and that of $GL_3(\mathbb{C})/B$.  In fact the moment graph of $GL_3(\mathbb{C})/B$ is a twisted product of the graph consisting of a single edge together with the moment graph of $\mathbb{P}^2$.  Figure \ref{figure: fiber bundle} shows how the single edge (drawn in boldface) is transported around the cycle with a M\"{o}bius-like twist on the back edges (drawn with dotted lines).  This is part of a general phenomenon studied extensively by Guillemin-Sabatini-Zara \cite{GSZ12, GSZ13}.  Each Grassmannian $G(k,n)$ can be identified with the quotient $GL_n(\mathbb{C})/P_k$ of the flag variety by the larger group $P_k \supset B$ of $n \times n$ invertible matrices whose bottom-left $n-k \times n-k$ block is zero. This gives rise to a fiber bundle $P_k/B \hookrightarrow GL_n(\mathbb{C})/B \rightarrow \hspace{-0.75em} \rightarrow G(k,n)$ whose equivariant cohomology satisfies the module isomorphism 
\[H^*_T(P/B) \otimes H^*_T(G(k,n)) \cong H^*_T(GL_n(\mathbb{C})/B)\]
This result is often called the equivariant Leray-Hirsch isomorphism; it can be realized as a straightforward product of splines in two different submodules of $H^*_T(GL_n(\mathbb{C})/B)$ \cite{DreTym}.  
\begin{figure}[h]


\begin{picture}(50,80)(-20,0)
\put(0,0){\circle*{5}}
\put(0,50){\circle*{5}}
\put(-25,25){\circle*{5}}

\put(0,0){\line(-1,1){25}}
\put(-25,25){\line(1,1){25}}

\put(50,25){\circle*{5}}
\put(50,75){\circle*{5}}
\put(25,50){\circle*{5}}

\multiput(0,0)(2,3){25}{\circle*{1}}
\multiput(50,25)(-4,2){12}{\circle*{1}}

\put(50,25){\line(-1,1){25}}
\put(25,50){\line(1,1){25}}

\thicklines
\thicklines
\put(50,25){\line(-2,-1){50}}
\put(50,75){\line(-2,-1){50}}
\put(25,50){\line(-2,-1){50}}

\end{picture}
\caption{The moment graph of $GL_3(\mathbb{C})/B$ drawn as a fiber bundle over $\mathbb{P}^2$} \label{figure: fiber bundle}
\end{figure}
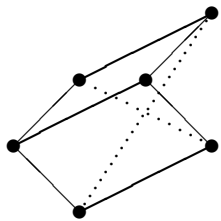

Just like projective space and Grassmannians, the flag variety has a natural collection of subvarieties called {\em Schubert varieties} that induce a basis for (ordinary and equivariant) cohomology.  There is also a remarkable formula for the polynomial associated to the restriction of the equivariant Schubert class $\sigma^v$ at the fixed point $w$.  This formula is often referred to as Billey's formula for $\sigma^v_w$ and was discovered by Andersen-Jantzen-Soergel and Billey independently \cite{AJS94, Bil99}.  We give the formula in the case of $GL_n(\mathbb{C})/B$ though it holds in more generality, including projective spaces and Grassmannians, so long as appropriate permutations are chosen to represent the Schubert classes in $H^*_T(G(k,n),\mathbb{C})$.  

\begin{theorem}[Billey's formula]
Write $w$ as a product of simple reflections $(i,i+1)$ in as few reflections as possible.  Denote this factorization of $w$ by $(i_1,i_1+1)(i_2,i_2+1)\cdots (i_k,i_k+1)$.  Let $R$ be the collection of subsets $\{j_1<j_2<\cdots < j_{\ell}\}$ of minimum cardinality for which $v = (i_{j_1},i_{j_1}+1)(i_{j_2},i_{j_2}+1)\cdots (i_{j_k},i_{j_k}+1)$.  Then 
\[ \sigma^v_w = \sum_{\{j_1<j_2<\cdots < j_{\ell}\} \in R} \prod_{r=1}^{\ell}  (i_1,i_1+1)(i_2,i_2+1)(i_3,i_3+1)\cdots (i_{j_r-1},i_{j_r-1}+1)(t_{j_r}-t_{j_r+1})\]
where permutations act on variables by exchanging the indices.
\end{theorem}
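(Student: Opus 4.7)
My plan is to prove Billey's formula by characterizing the equivariant Schubert class $\sigma^v$ through properties it must satisfy as an element of the spline (GKM) ring on the Bruhat graph, then verifying that the right-hand side (which I will write $B^v_w$) satisfies those same properties. The class $\sigma^v$ is uniquely determined by four conditions: (i) homogeneity of degree $\ell(v)$; (ii) the support condition $\sigma^v_w = 0$ whenever $v \not\leq w$ in Bruhat order; (iii) the initial value $\sigma^v_v = \prod_{\alpha \in \mathrm{inv}(v)} \alpha$, where $\mathrm{inv}(v)$ denotes the positive roots sent to negative by $v$; and (iv) the GKM edge-compatibility condition along every edge of the moment graph. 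Uniqueness rests on equivariant formality, which makes the spline ring a free module over $\mathbb{C}[t_1,\ldots,t_n]$ with a basis indexed by $S_n$.

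Properties (i)--(iii) follow quickly from the shape of the expression. Homogeneity is immediate, since each term of $B^v_w$ is a product of $\ell(v)$ linear forms. Property (ii) uses the subword characterization of Bruhat order (Strong Exchange): if $v \not\leq w$ then no reduced word for $w$ contains a reduced subword spelling $v$, so the indexing set $R$ is empty. For (iii), taking $w = v$ forces $R$ to consist of the single full-length subword, and the product of prefix-permuted simple roots rearranges exactly to the inversion product for $v$.

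The main obstacle is property (iv): for each edge $w \to tw$ of the Bruhat graph with label $\alpha$, one must show $B^v_w - B^v_{tw}$ is divisible by $\alpha$. I would establish this by induction on $\ell(w)$. Writing a reduced expression $w = s_{i_1} w'$ and splitting the indexing set $R$ according to whether or not position $1$ lies in the subword $J$, a direct manipulation shows that Billey's expression obeys the recursion
\[ B^v_w \;=\; s_{i_1}\!\bigl(B^v_{w'}\bigr) \;+\; (t_{i_1} - t_{i_1+1})\cdot s_{i_1}\!\bigl(B^{s_{i_1}v}_{w'}\bigr), \]
in which $s_{i_1}$ acts on polynomials by swapping $t_{i_1}$ and $t_{i_1+1}$, and the second summand is vacuous unless $\ell(s_{i_1}v) < \ell(v)$. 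The Schubert classes satisfy the same recursion, a standard consequence of the action of the left divided-difference (Demazure) operators on $H^*_T(GL_n(\mathbb{C})/B)$, and the base case $w = e$ is trivial, so the induction closes. An alternative route sidesteps divided differences entirely: verify edge compatibility directly for $B^v_w$ by pairing reduced subwords of $w$ with those of $tw$ via the strong exchange property and checking divisibility term by term. Either way, the delicate step is combinatorial bookkeeping about reduced words, and the independence of the formula from the choice of reduced expression for $w$ follows a posteriori from the uniqueness of $\sigma^v$.
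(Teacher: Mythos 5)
The paper itself gives no proof of this theorem --- it is stated as a cited result (Andersen--Jantzen--Soergel and Billey), with only the remark elsewhere that divided difference operators enter its proof --- so there is no internal argument to compare against. Your outline essentially reconstructs Billey's original proof, and its core is sound. The decisive step is the one you identify: splitting the set $R$ of reduced subwords of $w=s_{i_1}w'$ according to whether position $1$ is used gives exactly the recursion $B^v_w = s_{i_1}(B^v_{w'}) + \alpha_{i_1}\, s_{i_1}(B^{s_{i_1}v}_{w'})$ (second term present only when $\ell(s_{i_1}v)<\ell(v)$), and the Schubert restrictions satisfy the same recursion via the left $S_n$-action and the identity $\partial_i\sigma^v=\sigma^{s_iv}$ or $0$. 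One small point worth making explicit: both cases of that identity are needed --- the case $s_iv<v$ evaluated at $w$ gives $\sigma^v_w = s_i(\sigma^v_{w'}) + \alpha_i\,\sigma^{s_iv}_w$, and the vanishing case applied to $\sigma^{s_iv}$ converts $\sigma^{s_iv}_w$ into $s_i(\sigma^{s_iv}_{w'})$ so as to match your recursion. With the base case $w=e$ this closes the induction on $\ell(w)$ (carried over all reduced words), and independence of the chosen word follows a posteriori, as you say.

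Two lesser quibbles. First, once you run the recursion argument, the characterization (i)--(iv) and its uniqueness are superfluous scaffolding; and if you do keep that route, uniqueness is not really a consequence of equivariant formality or freeness but of the support-plus-degree argument: a homogeneous spline of degree $\ell(v)$ supported on $\{u : u > v\}$ would, at a minimal vertex $u$ of its support, be divisible by the $\ell(u) > \ell(v)$ pairwise non-proportional edge labels on the downward edges at $u$, forcing it to vanish. The "alternative route" of verifying the GKM edge condition for $B^v$ directly by pairing subwords is genuinely delicate and is only gestured at, so it should not be counted as established. Second, you have implicitly (and correctly) repaired the statement's typographical slips --- the factor should be the prefix-permuted simple root $t_{i_{j_r}}-t_{i_{j_r}+1}$ (up to the sign convention used in the paper's example), not $t_{j_r}-t_{j_r+1}$, and the subword must spell $v$ with $\ell=\ell(v)$ letters; it is worth saying so explicitly rather than silently.
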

For instance $w=(13)$ could be written either $(12)(23)(12)$ or $(23)(12)(23)$.  If we pick $w=(12)(23)(12)$ and $v=(23)$ then the set $R$ contains only $\{2\}$ and 
\[\sigma^v_w = (12)(t_3-t_2)=t_3-t_1\]
If instead we take $w=(23)(12)(23)$ and $v=(23)$ then the set $R$ contains $\{1\}$ and $\{3\}$ and 
\[\sigma^v_w = (t_3-t_2) + (23)(12)(t_3-t_2)=t_3-t_2 + t_2-t_1=t_3-t_1\]
This example demonstrates that it is nontrivial even to see that the formula is well-defined. Figure \ref{figure: Billey's formula} gives all of the equivariant Schubert classes $\sigma^v$ in $H^*_T(GL_3(\mathbb{C})/B, \mathbb{C})$ listed in order 
\[v=e, (12), (23), (13), (12)(23), (23)(12)\]  
Each spline $\sigma^v$ in Figure \ref{figure: Billey's formula} is denoted by a copy of the graph with vertex $w$ labeled by the polynomial $\sigma^v_w$.  The top vertex of the second spline is labeled with the polynomial $t_3-t_1$ that we just computed.
\begin{figure}[h]
\begin{picture}(75,100)(-35,0)
\put(0,15){\circle*{5}}
\put(-30,30){\circle*{5}}
\put(30,30){\circle*{5}}
\put(-30,60){\circle*{5}}
\put(30,60){\circle*{5}}
\put(0,75){\circle*{5}}

\put(0,15){\line(-2,1){30}}
\put(30,30){\line(-2,1){60}}
\put(30,60){\line(-2,1){30}}

\put(0,15){\line(2,1){30}}
\put(-30,30){\line(2,1){60}}
\put(-30,60){\line(2,1){30}}

\put(0,15){\line(0,1){60}}
\put(-30,30){\line(0,1){30}}
\put(30,30){\line(0,1){30}}

\put(-2,0){\small $1$}
\put(-30,17){\small $1$}
\put(20,17){\small $1$}
\put(-30,67){\small $1$}
\put(20,67){\small $1$}
\put(-2,78){\small $1$}
\end{picture}
\hspace{0.35in}
\begin{picture}(75,100)(-35,0)
\put(0,15){\circle*{5}}
\put(-30,30){\circle*{5}}
\put(30,30){\circle*{5}}
\put(-30,60){\circle*{5}}
\put(30,60){\circle*{5}}
\put(0,75){\circle*{5}}

\put(0,15){\line(-2,1){30}}
\put(30,30){\line(-2,1){60}}
\put(30,60){\line(-2,1){30}}

\put(0,15){\line(2,1){30}}
\put(-30,30){\line(2,1){60}}
\put(-30,60){\line(2,1){30}}

\put(0,15){\line(0,1){60}}
\put(-30,30){\line(0,1){30}}
\put(30,30){\line(0,1){30}}

\put(-2,0){\small $0$}
\put(-45,15){\small $t_2-t_1$}
\put(20,17){\small $0$}
\put(-45,70){\small $t_2-t_1$}
\put(20,67){\small $t_3-t_1$}
\put(-10,80){\small $t_3-t_1$}
\end{picture}
\hspace{0.35in}
\begin{picture}(75,100)(-35,0)
\put(0,15){\circle*{5}}
\put(-30,30){\circle*{5}}
\put(30,30){\circle*{5}}
\put(-30,60){\circle*{5}}
\put(30,60){\circle*{5}}
\put(0,75){\circle*{5}}

\put(0,15){\line(-2,1){30}}
\put(30,30){\line(-2,1){60}}
\put(30,60){\line(-2,1){30}}

\put(0,15){\line(2,1){30}}
\put(-30,30){\line(2,1){60}}
\put(-30,60){\line(2,1){30}}

\put(0,15){\line(0,1){60}}
\put(-30,30){\line(0,1){30}}
\put(30,30){\line(0,1){30}}

\put(-2,0){\small $0$}
\put(-30,17){\small $0$}
\put(20,17){\small $t_3-t_2$}
\put(-45,70){\small $t_3-t_1$}
\put(20,67){\small $t_3-t_2$}
\put(-10,80){\small $t_3-t_1$}
\end{picture}
\hspace{0.35in}
\begin{picture}(75,100)(-35,0)
\put(0,15){\circle*{5}}
\put(-30,30){\circle*{5}}
\put(30,30){\circle*{5}}
\put(-30,60){\circle*{5}}
\put(30,60){\circle*{5}}
\put(0,75){\circle*{5}}

\put(0,15){\line(-2,1){30}}
\put(30,30){\line(-2,1){60}}
\put(30,60){\line(-2,1){30}}

\put(0,15){\line(2,1){30}}
\put(-30,30){\line(2,1){60}}
\put(-30,60){\line(2,1){30}}

\put(0,15){\line(0,1){60}}
\put(-30,30){\line(0,1){30}}
\put(30,30){\line(0,1){30}}

\put(-2,0){\small $0$}
\put(-30,17){\small $0$}
\put(20,17){\small $0$}
\put(-30,67){\small $0$}
\put(20,67){\small $0$}
\put(-42,80){\tiny $(t_2-t_1)(t_3-t_2)(t_3-t_1)$}
\end{picture}

\begin{picture}(75,100)(-35,0)
\put(0,15){\circle*{5}}
\put(-30,30){\circle*{5}}
\put(30,30){\circle*{5}}
\put(-30,60){\circle*{5}}
\put(30,60){\circle*{5}}
\put(0,75){\circle*{5}}

\put(0,15){\line(-2,1){30}}
\put(30,30){\line(-2,1){60}}
\put(30,60){\line(-2,1){30}}

\put(0,15){\line(2,1){30}}
\put(-30,30){\line(2,1){60}}
\put(-30,60){\line(2,1){30}}

\put(0,15){\line(0,1){60}}
\put(-30,30){\line(0,1){30}}
\put(30,30){\line(0,1){30}}

\put(-2,0){\small $0$}
\put(-30,17){\small $0$}
\put(20,17){\small $0$}
\put(-100,68){\small $(t_2-t_1)(t_3-t_1)$}
\put(20,67){\small $0$}
\put(-35,82){\small $(t_2-t_1)(t_3-t_1)$}
\end{picture}
\hspace{0.75in}
\begin{picture}(75,100)(-35,0)
\put(0,15){\circle*{5}}
\put(-30,30){\circle*{5}}
\put(30,30){\circle*{5}}
\put(-30,60){\circle*{5}}
\put(30,60){\circle*{5}}
\put(0,75){\circle*{5}}

\put(0,15){\line(-2,1){30}}
\put(30,30){\line(-2,1){60}}
\put(30,60){\line(-2,1){30}}

\put(0,15){\line(2,1){30}}
\put(-30,30){\line(2,1){60}}
\put(-30,60){\line(2,1){30}}

\put(0,15){\line(0,1){60}}
\put(-30,30){\line(0,1){30}}
\put(30,30){\line(0,1){30}}

\put(-2,0){\small $0$}
\put(-30,17){\small $0$}
\put(20,17){\small $0$}
\put(-30,67){\small $0$}
\put(21,68){\small $(t_3-t_2)(t_3-t_1)$}
\put(-35,82){\small $(t_3-t_2)(t_3-t_1)$}
\end{picture}
\caption{A module basis for $H^*_T(GL_3(\mathbb{C})/B, \mathbb{C})$ obtained using Billey's formula} \label{figure: Billey's formula}
\end{figure}
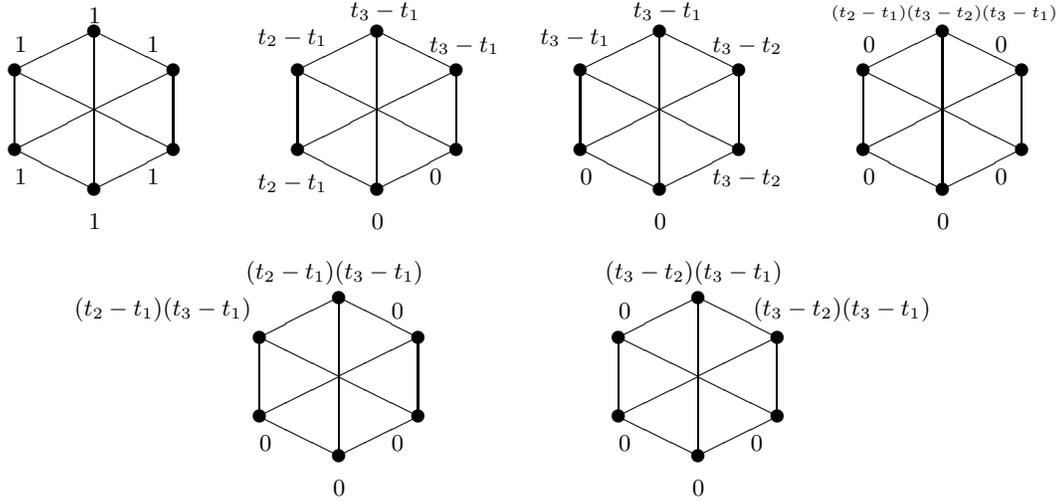

We close this section with an open question.  Recall that there is a closed formula for coefficients in the multiplication table for the cohomology $H^*(G(k,n),\mathbb{C})$ of each Grassmannian.  Perhaps surprisingly there is no good formula for the analogue in the case of the flag variety, either in ordinary or equivariant cohomology.  What does ``good" mean?  It means an explicit, positive formula---essentially a formula that counts some quantity rather than restating Billey's formula with inclusion/exclusion.  This is a problem that has attracted enormous attention within Schubert calculus with only limited success.  We suspect that the problem needs radically new tools or perspective.

\section{Geometric and topological tools for computing with splines}\label{section: tools}

In this section we describe three very natural tools for geometers and topologists working on splines.  We focus on how these tools are used to identify bases for the module of splines.  

It bears repeating that one significant difference from the analytic approach to splines is that geometers typically think about splines as modules {\em over a polynomial ring} rather than over the base field $\mathbb{C}$ or $\mathbb{R}$.  Geometers also tend to use bases of splines whose polynomials $p_u$ are either zero or homogeneous of a fixed degree depending on $p$.  This means that each module basis spline corresponds to a family of vector-space basis splines.  For instance in $S^1_2(\Delta)$ the constant module basis element ${\bf 1}$ gives rise to the six vector-space basis elements $\{ {\bf 1}, x{\bf 1}, y{\bf 1}, x^2{\bf 1}, xy{\bf 1}, y^2{\bf 1}\}$ while a degree-two basis element $p$ gives rise only to itself as a vector-space basis element.  (If there were degree-one basis elements $p$ they would generally correspond to the three vector-space basis elements $\{p, xp, yp\}$.) In some settings one needs to worry about whether the module is actually free over the polynomial ring, which would imply that minimal generating sets of splines are actually bases.  Spaces that satisfy the GKM hypotheses are always free modules so this is not a concern for GKM theory.

We now discuss some geometric strategies to identify module generators for splines. 

Recall in what follows that geometers traditionally study $S^0_{\infty}(G_X)$ since those are the splines isomorphic to equivariant cohomology (though Section \ref{section: generalized splines} describes interpretations of the ring $S^r_d(G_X)$).

\subsection{Toric actions and flow-up bases}\label{subsection: torus flow-up bases}

Given a torus acting on a space in $X$, one of the most natural things for geometers and topologists to do is to push subspaces around in $X$ using the torus.  This technique goes by different names in different fields (e.g. Morse theory for analysts/topologists, Bialynicki-Birula decompositions for algebraists/geometers). Regardless of the name, the techniques have consequences for splines that can be described precisely without reference to the underlying geometry.  

These techniques often use a one-dimensional torus chosen ``suitably generically" from a possibly larger torus action on $X$.  From a combinatorial perspective, choosing a one-dimensional torus corresponds to directing the edges of the graph $G_X$.  The ``suitably generic" condition helps ensure that the directed graph is a poset, namely $G_X$ has no directed cycles. 

These techniques give an analogue for splines of upper-triangularity, though the definition we give here is independent of geometry.

\begin{definition}\label{definition: flow-up}
Suppose that $G$ is a directed, edge-labeled graph.  A spline $p$ on $G$ is poset upper-triangular if there is a vertex $v$ in the graph for which $p_v \neq 0$ and $p_u = 0$ for all vertices $u$ in the graph without a directed path to $v$.  (The second condition can be written $u \not > v$ with the partial order $>$ defined by the direction on the graph.)  A basis $\mathcal{B}$ for the set of splines on $G$ is a flow-up basis if its elements can be indexed with distinct vertices so that the basis elements are all poset upper-triangular.  
\end{definition}

Intuitively each element of a flow-up basis collects the points that ``flow up" from a specific vertex according to the action of the torus.  

For some graphs, like cycles, poset upper-triangularity is equivalent to ordinary upper-triangularity.  Figure \ref{figure: flow-up basis P} shows a flow-up basis for the equivariant cohomology of $\mathbb{P}^2$.  Each spline is denoted by a copy of the graph with vertices labeled by polynomials.  For the reader's convenience, we left the edge-labels on the graph; they are indicated in grey.
\begin{figure}[h]
\begin{picture}(10,80)(0,-15)
\put(0,0){\circle*{5}}
\put(0,50){\circle*{5}}
\put(25,25){\circle*{5}}

\put(0,0){\line(0,1){50}}
\put(0,0){\line(1,1){25}}
\put(25,25){\line(-1,1){25}}

\put(-37,22){\color{gray} $t_3-t_1$}
\put(17,7){\color{gray} $t_2-t_1$}
\put(18,38){\color{gray} $t_3-t_2$}

\put(-3,-14){$1$}
\put(-3,56){$1$}
\put(30,21){$1$}
\end{picture}
\hspace{2in}
\begin{picture}(10,80)(0,-15)
\put(0,0){\circle*{5}}
\put(0,50){\circle*{5}}
\put(25,25){\circle*{5}}

\put(0,0){\line(0,1){50}}
\put(0,0){\line(1,1){25}}
\put(25,25){\line(-1,1){25}}

\put(-37,22){\color{gray} $t_3-t_1$}
\put(17,7){\color{gray} $t_2-t_1$}
\put(18,38){\color{gray} $t_3-t_2$}

\put(-3,-14){$0$}
\put(-13,56){$t_2-t_1$}
\put(30,21){$t_3-t_1$}
\end{picture}
\hspace{2in}
\begin{picture}(10,80)(0,-15)
\put(0,0){\circle*{5}}
\put(0,50){\circle*{5}}
\put(25,25){\circle*{5}}

\put(0,0){\line(0,1){50}}
\put(0,0){\line(1,1){25}}
\put(25,25){\line(-1,1){25}}

\put(-37,22){\color{gray} $t_3-t_1$}
\put(17,7){\color{gray} $t_2-t_1$}
\put(18,38){\color{gray} $t_3-t_2$}

\put(-3,-14){$0$}
\put(-28,56){$(t_3-t_2)(t_3-t_1)$}
\put(30,21){$0$}
\end{picture}
\caption{Flow-up basis for $H^*_T(\mathbb{P}^2,\mathbb{C})$}\label{figure: flow-up basis P}
\end{figure}
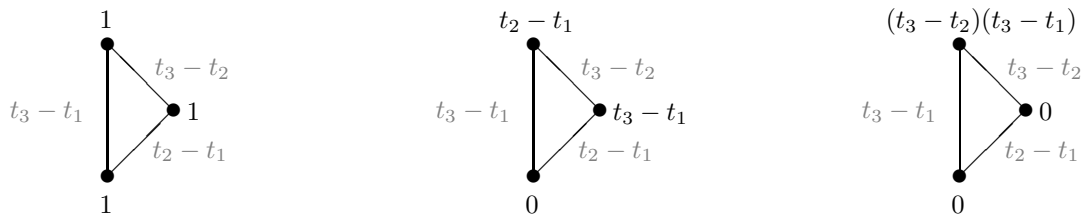
Flow-up bases for more general graphs do not correspond as naturally to upper-triangular bases.  For instance Figure \ref{figure: Billey's formula} shows a flow-up basis for $H^*_T(GL_3(\mathbb{C})/B, \mathbb{C})$.  We can impose a total order on the vertices to make this basis upper-triangular, but that requires an arbitrary choice; poset upper-triangularity is the more natural condition.  (For flag varieties, Grassmannians, and more general partial flag varieties, the flow-up basis is precisely the Schubert basis.)

Geometers generally require a stronger condition on flow-up classes than in Definition \ref{definition: flow-up}: that the flow-up class $p^v$ restricted to the fixed point $v$ is the product of the labels on the edges directed out of $v$.  Our definition applies in greater generality, discussed at greater length in Section \ref{section: generalized splines}.

Many natural spaces $X$ satisfy an additional geometric condition called {\em Palais-Smale}, which ensures that if a flow-up basis exists then it is unique.  Combinatorially the Palais-Smale condition says that the graph $G$ satisfies the property that for each directed edge $u \mapsto v$ the number of edges directed out of $u$ is greater than the number of edges directed out of $v$.   Informally the number of out-edges increases along each path up in the graph.  While the geometric Palais-Smale condition is stronger, the combinatorial Palais-Smale condition suffices to guarantee uniqueness of flow-up bases.  At the same time many natural manifolds are not Palais-Smale. For instance Figure \ref{figure: Hessenberg varieties} shows a family of manifolds called {\em regular semisimple Hessenberg varieties} parametrized by certain nondecreasing vectors recorded below each moment graph.  The third example from the left is not Palais-Smale.  Explicitly the vertex associated to the permutation $(12)$ has both the flow-up class in Figure \ref{figure: Billey's formula} and the flow-up class whose only nonzero labels are at vertices $(12)$ and $(12)(23)$, labeled by the polynomials $t_2-t_1$ and $t_2-t_3$ respectively.
\begin{figure}[h]
\begin{picture}(75,100)(-35,0)
\put(0,15){\circle*{5}}
\put(-30,30){\circle*{5}}
\put(30,30){\circle*{5}}
\put(-30,60){\circle*{5}}
\put(30,60){\circle*{5}}
\put(0,75){\circle*{5}}



\put(-30,-3){$h=(1,2,3)$}
\end{picture}
\hspace{0.35in}
\begin{picture}(75,100)(-35,0)
\put(0,15){\circle*{5}}
\put(-30,30){\circle*{5}}
\put(30,30){\circle*{5}}
\put(-30,60){\circle*{5}}
\put(30,60){\circle*{5}}
\put(0,75){\circle*{5}}

\put(0,15){\line(-2,1){30}}

\put(-30,60){\line(2,1){30}}

\put(30,30){\line(0,1){30}}

\put(-30,-3){$h=(2,2,3)$}
\end{picture}
\hspace{0.35in}
\begin{picture}(75,100)(-35,0)
\put(0,15){\circle*{5}}
\put(-30,30){\circle*{5}}
\put(30,30){\circle*{5}}
\put(-30,60){\circle*{5}}
\put(30,60){\circle*{5}}
\put(0,75){\circle*{5}}

\put(0,15){\line(-2,1){30}}
\put(30,60){\line(-2,1){30}}

\put(0,15){\line(2,1){30}}
\put(-30,60){\line(2,1){30}}

\put(-30,30){\line(0,1){30}}
\put(30,30){\line(0,1){30}}
\put(-30,-3){$h=(2,3,3)$}
\end{picture}
\hspace{0.35in}
\begin{picture}(75,100)(-35,0)
\put(0,15){\circle*{5}}
\put(-30,30){\circle*{5}}
\put(30,30){\circle*{5}}
\put(-30,60){\circle*{5}}
\put(30,60){\circle*{5}}
\put(0,75){\circle*{5}}

\put(0,15){\line(-2,1){30}}
\put(30,30){\line(-2,1){60}}
\put(30,60){\line(-2,1){30}}

\put(0,15){\line(2,1){30}}
\put(-30,30){\line(2,1){60}}
\put(-30,60){\line(2,1){30}}

\put(0,15){\line(0,1){60}}
\put(-30,30){\line(0,1){30}}
\put(30,30){\line(0,1){30}}
\put(-30,-2){$h=(3,3,3)$}
\end{picture}
\caption{Moment graphs for regular semisimple Hessenberg varieties (edges and vertices labeled as in Figure \ref{figure: moment graph GL_3/B})}\label{figure: Hessenberg varieties}
\end{figure}
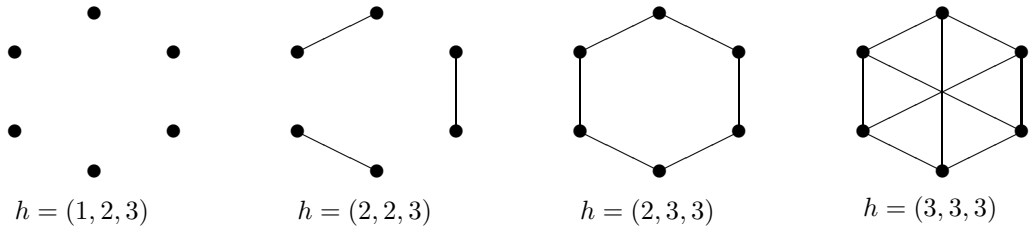

\subsection{Geometric representations and symmetrized bases}

A second geometric tool for working with splines comes out of geometric representation theory.  The fundamental idea is that graph automorphisms (namely bijections $V(G) \rightarrow V(G)$ that preserve the adjacencies in $G$) sometimes preserve the spline relations as well.  In this section we sketch the general philosophy using flag varieties; for a deeper discussion and more examples, see e.g. \cite{Tym08a}.  

Recall that the vertices in the moment graph of the flag variety are associated with the elements of the permutation group $S_n$.  Each element $v$ in the group $S_n$ acts on $S_n$ in two different ways: either by left multiplication $v \cdot w \mapsto vw$ or by right multiplication $v * w \mapsto wv^{-1}$.  Both of these actions give rise to an action on splines as follows.

Right-multiplication on the vertices induces an action on the splines in the equivariant cohomology ring because the labels on the edges come from left-multiplication by transpositions.  In other words the moment graph for the flag variety contains an edge $(ij)w \leftrightarrow w$ if and only if it contains an edge $(ij)wv^{-1} \leftrightarrow wv^{-1}$ for each $v \in S_n$.  Figure \ref{figure: right multiplication} shows what right-multiplication by the simple reflection $(23)$ does to a flow-up class.  
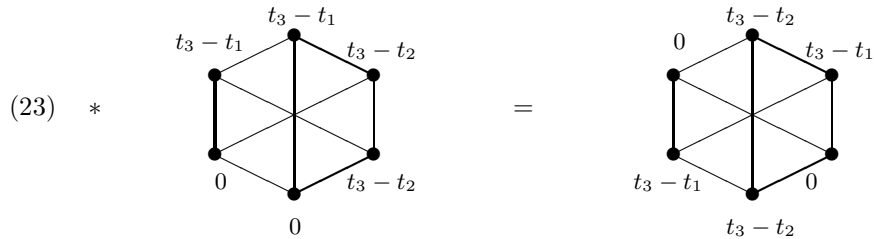
\begin{figure}[h]
\begin{picture}(30,100)(0,0)
\put(0,45){$(23)$}
\put(30,45){$*$} 
\end{picture}
\hspace{0.5in}
\begin{picture}(75,100)(-35,0)
\put(0,15){\circle*{5}}
\put(-30,30){\circle*{5}}
\put(30,30){\circle*{5}}
\put(-30,60){\circle*{5}}
\put(30,60){\circle*{5}}
\put(0,75){\circle*{5}}

\put(0,15){\line(-2,1){30}}
\put(30,30){\line(-2,1){60}}

\put(-30,30){\line(2,1){60}}
\put(-30,60){\line(2,1){30}}

\put(0,15){\line(0,1){60}}
\put(30,30){\line(0,1){30}}

\put(-2,0){\small $0$}
\put(-30,17){\small $0$}
\put(20,17){\small $t_3-t_2$}
\put(-45,70){\small $t_3-t_1$}
\put(20,67){\small $t_3-t_2$}
\put(-10,80){\small $t_3-t_1$}

\thicklines
\put(0,15){\line(2,1){30}}
\put(30,60){\line(-2,1){30}}
\put(-30,30){\line(0,1){30}}
\end{picture}
\hspace{0.5in}
\begin{picture}(20,100)(0,0)
\put(0,45){$=$}
\end{picture}
\hspace{0.4in}
\begin{picture}(75,100)(-35,0)
\put(0,15){\circle*{5}}
\put(-30,30){\circle*{5}}
\put(30,30){\circle*{5}}
\put(-30,60){\circle*{5}}
\put(30,60){\circle*{5}}
\put(0,75){\circle*{5}}

\put(0,15){\line(-2,1){30}}
\put(30,30){\line(-2,1){60}}

\put(-30,30){\line(2,1){60}}
\put(-30,60){\line(2,1){30}}

\put(0,15){\line(0,1){60}}
\put(30,30){\line(0,1){30}}

\put(-10,0){\small $t_3-t_2$}
\put(-45,17){\small $t_3-t_1$}
\put(20,17){\small $0$}
\put(-30,70){\small $0$}
\put(20,67){\small $t_3-t_1$}
\put(-10,80){\small $t_3-t_2$}

\thicklines
\put(0,15){\line(2,1){30}}
\put(30,60){\line(-2,1){30}}
\put(-30,30){\line(0,1){30}}
\end{picture}
\caption{Right-action of $S_3$ on splines in $H^*_T(GL_3(\mathbb{C})/B)$}\label{figure: right multiplication}
\end{figure}

Left-multiplication of the vertex labels by $(23)$ corresponds to exchanging labels on either side of the edges of positive slope.  That does not preserve splines: for instance it would send the flow-up class of Figure \ref{figure: right multiplication} to a class whose central vertical edge had $t_3-t_2$ on the bottom and $t_3-t_1$ on the top.  However if we also permute the variables by $(23)$ the outcome is again a spline, as shown in Figure \ref{figure: left multiplication}.  
\begin{figure}[h]
\begin{picture}(30,100)(0,0)
\put(0,45){$(23)$}
\put(30,48){\circle*{3}} 
\end{picture}
\hspace{0.5in}
\begin{picture}(75,100)(-35,0)
\put(0,15){\circle*{5}}
\put(-30,30){\circle*{5}}
\put(30,30){\circle*{5}}
\put(-30,60){\circle*{5}}
\put(30,60){\circle*{5}}
\put(0,75){\circle*{5}}

\put(0,15){\line(-2,1){30}}
\put(30,30){\line(-2,1){60}}
\put(30,60){\line(-2,1){30}}

\put(0,15){\line(0,1){60}}
\put(-30,30){\line(0,1){30}}
\put(30,30){\line(0,1){30}}

\put(-2,0){\small $0$}
\put(-30,17){\small $0$}
\put(20,17){\small $t_3-t_2$}
\put(-48,70){\small $t_3-t_1$}
\put(20,67){\small $t_3-t_2$}
\put(-10,80){\small $t_3-t_1$}

\thicklines
\put(0,15){\line(2,1){30}}
\put(-30,30){\line(2,1){60}}
\put(-30,60){\line(2,1){30}}
\end{picture}
\hspace{0.5in}
\begin{picture}(20,100)(0,0)
\put(0,45){$=$}
\end{picture}
\hspace{0.4in}
\begin{picture}(75,100)(-35,0)
\put(0,15){\circle*{5}}
\put(-30,30){\circle*{5}}
\put(30,30){\circle*{5}}
\put(-30,60){\circle*{5}}
\put(30,60){\circle*{5}}
\put(0,75){\circle*{5}}

\put(0,15){\line(-2,1){30}}
\put(30,30){\line(-2,1){60}}
\put(30,60){\line(-2,1){30}}

\put(0,15){\line(0,1){60}}
\put(-30,30){\line(0,1){30}}
\put(30,30){\line(0,1){30}}

\put(-10,0){\small $t_2-t_3$}
\put(-45,17){\small $t_2-t_3$}
\put(20,17){\small $0$}
\put(-48,70){\small $t_2-t_1$}
\put(20,67){\small $0$}
\put(-10,80){\small $t_2-t_1$}

\thicklines
\put(0,15){\line(2,1){30}}
\put(-30,30){\line(2,1){60}}
\put(-30,60){\line(2,1){30}}
\end{picture}
\caption{Left-action of $S_3$ on splines in $H^*_T(GL_3(\mathbb{C})/B)$}\label{figure: left multiplication}
\end{figure}
(This action on polynomials appeared in Billey's formula in Section \ref{subsection: flag varieties} as well.)

These geometric representations lend themselves to two different kinds of tools.  The first is a collection of degree-lowering operators on the collection of splines that behaves particularly well with respect to flow-up bases.  Bernstein-Gelfand-Gelfand and Demazure originally defined {\em divided difference operators} for flag varieties \cite{BGG73, Dem73}.  The essential formula for a divided difference operator $\partial_i$ is:
\[ \partial_i p = \frac{(i,i+1) \cdot p - p}{t_{i+1}-t_i}\]
As written the formula could apply to polynomials $p$ with $(i,i+1)$ exchanging the $i^{th}$ and $i+1^{th}$ variables, or it could apply to splines $p$ with the left action of $S_n$ defined above.  A similar formula can also be written for the right action but instead of dividing by the constant $t_{i+1}-t_i$ we divide by the label on the edge $v(i,i+1) \leftrightarrow v$ incident to each vertex $v$.  One key property divided difference operators satisfy for the flow-up (equivalently Schubert) basis $\{ \sigma_v: v \in S_n\}$ of the flag variety is
\[ \partial_i \sigma_v = \left\{ \begin{array}{l}
\sigma_{(i,i+1)v} \textup{    if $(i,i+1)v$ is below $v$ in the moment graph }\\
0 \textup{   otherwise}
\end{array} \right.\]
Divided difference operators are an important tool in algebraic combinatorics; for instance, part of the proof of Billey's formula is an elementary application of divided difference operators \cite{Bil99}.

The second tool has been much less studied.  Representations can also be used to create {\em symmetrized bases} for splines or subsets of splines.  In this case the group action can be used to transport a particular polynomial around the vertices, creating splines that are fixed by the overall $S_n$ action.  Figure \ref{figure: symmetrized basis for flag variety} shows a symmetrized basis for splines on the flag variety $GL_3(\mathbb{C})/B$. 
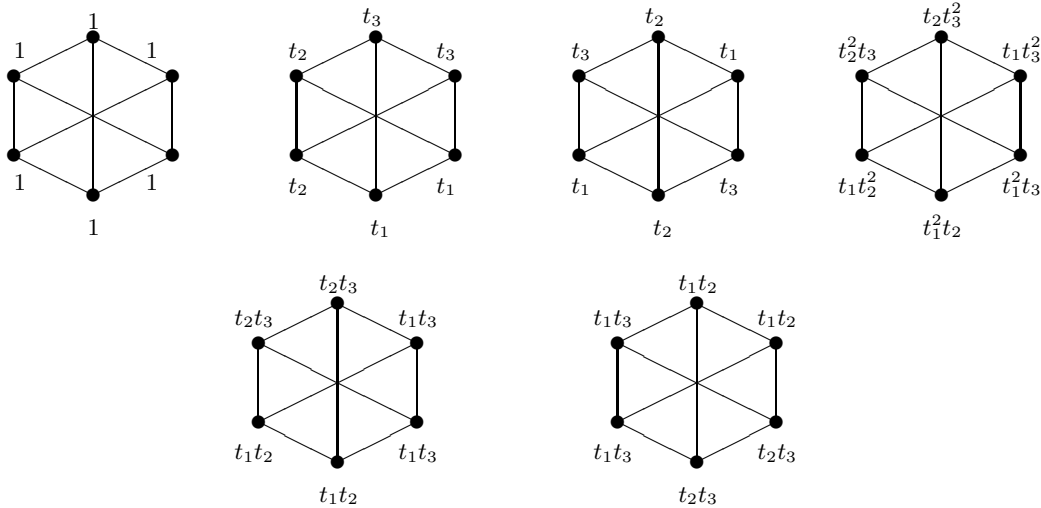
\begin{figure}[h]
\begin{picture}(75,100)(-35,0)
\put(0,15){\circle*{5}}
\put(-30,30){\circle*{5}}
\put(30,30){\circle*{5}}
\put(-30,60){\circle*{5}}
\put(30,60){\circle*{5}}
\put(0,75){\circle*{5}}

\put(0,15){\line(-2,1){30}}
\put(30,30){\line(-2,1){60}}
\put(30,60){\line(-2,1){30}}

\put(0,15){\line(2,1){30}}
\put(-30,30){\line(2,1){60}}
\put(-30,60){\line(2,1){30}}

\put(0,15){\line(0,1){60}}
\put(-30,30){\line(0,1){30}}
\put(30,30){\line(0,1){30}}

\put(-2,0){\small $1$}
\put(-30,17){\small $1$}
\put(20,17){\small $1$}
\put(-30,67){\small $1$}
\put(20,67){\small $1$}
\put(-2,78){\small $1$}
\end{picture}
\hspace{0.35in}
\begin{picture}(75,100)(-35,0)
\put(0,15){\circle*{5}}
\put(-30,30){\circle*{5}}
\put(30,30){\circle*{5}}
\put(-30,60){\circle*{5}}
\put(30,60){\circle*{5}}
\put(0,75){\circle*{5}}

\put(0,15){\line(-2,1){30}}
\put(30,30){\line(-2,1){60}}
\put(30,60){\line(-2,1){30}}

\put(0,15){\line(2,1){30}}
\put(-30,30){\line(2,1){60}}
\put(-30,60){\line(2,1){30}}

\put(0,15){\line(0,1){60}}
\put(-30,30){\line(0,1){30}}
\put(30,30){\line(0,1){30}}

\put(-2,0){\small $t_1$}
\put(-33,16){\small $t_2$}
\put(23,16){\small $t_1$}
\put(-33,67){\small $t_2$}
\put(23,67){\small $t_3$}
\put(-5,80){\small $t_3$}
\end{picture}
\hspace{0.35in}
\begin{picture}(75,100)(-35,0)
\put(0,15){\circle*{5}}
\put(-30,30){\circle*{5}}
\put(30,30){\circle*{5}}
\put(-30,60){\circle*{5}}
\put(30,60){\circle*{5}}
\put(0,75){\circle*{5}}

\put(0,15){\line(-2,1){30}}
\put(30,30){\line(-2,1){60}}
\put(30,60){\line(-2,1){30}}

\put(0,15){\line(2,1){30}}
\put(-30,30){\line(2,1){60}}
\put(-30,60){\line(2,1){30}}

\put(0,15){\line(0,1){60}}
\put(-30,30){\line(0,1){30}}
\put(30,30){\line(0,1){30}}

\put(-2,0){\small $t_2$}
\put(-33,16){\small $t_1$}
\put(23,16){\small $t_3$}
\put(-33,67){\small $t_3$}
\put(23,67){\small $t_1$}
\put(-5,80){\small $t_2$}
\end{picture}
\hspace{0.35in}
\begin{picture}(75,100)(-35,0)
\put(0,15){\circle*{5}}
\put(-30,30){\circle*{5}}
\put(30,30){\circle*{5}}
\put(-30,60){\circle*{5}}
\put(30,60){\circle*{5}}
\put(0,75){\circle*{5}}

\put(0,15){\line(-2,1){30}}
\put(30,30){\line(-2,1){60}}
\put(30,60){\line(-2,1){30}}

\put(0,15){\line(2,1){30}}
\put(-30,30){\line(2,1){60}}
\put(-30,60){\line(2,1){30}}

\put(0,15){\line(0,1){60}}
\put(-30,30){\line(0,1){30}}
\put(30,30){\line(0,1){30}}

\put(-7,0){\small $t_1^2t_2$}
\put(-39,16){\small $t_1t_2^2$}
\put(23,16){\small $t_1^2t_3$}
\put(-39,67){\small $t_2^2t_3$}
\put(23,67){\small $t_1t_3^2$}
\put(-7,80){\small $t_2t_3^2$}
\end{picture}

\begin{picture}(75,100)(-35,0)
\put(0,15){\circle*{5}}
\put(-30,30){\circle*{5}}
\put(30,30){\circle*{5}}
\put(-30,60){\circle*{5}}
\put(30,60){\circle*{5}}
\put(0,75){\circle*{5}}

\put(0,15){\line(-2,1){30}}
\put(30,30){\line(-2,1){60}}
\put(30,60){\line(-2,1){30}}

\put(0,15){\line(2,1){30}}
\put(-30,30){\line(2,1){60}}
\put(-30,60){\line(2,1){30}}

\put(0,15){\line(0,1){60}}
\put(-30,30){\line(0,1){30}}
\put(30,30){\line(0,1){30}}

\put(-7,0){\small $t_1t_2$}
\put(-39,16){\small $t_1t_2$}
\put(23,16){\small $t_1t_3$}
\put(-39,67){\small $t_2t_3$}
\put(23,67){\small $t_1t_3$}
\put(-7,80){\small $t_2t_3$}
\end{picture}
\hspace{0.75in}
\begin{picture}(75,100)(-35,0)
\put(0,15){\circle*{5}}
\put(-30,30){\circle*{5}}
\put(30,30){\circle*{5}}
\put(-30,60){\circle*{5}}
\put(30,60){\circle*{5}}
\put(0,75){\circle*{5}}

\put(0,15){\line(-2,1){30}}
\put(30,30){\line(-2,1){60}}
\put(30,60){\line(-2,1){30}}

\put(0,15){\line(2,1){30}}
\put(-30,30){\line(2,1){60}}
\put(-30,60){\line(2,1){30}}

\put(0,15){\line(0,1){60}}
\put(-30,30){\line(0,1){30}}
\put(30,30){\line(0,1){30}}

\put(-7,0){\small $t_2t_3$}
\put(-39,16){\small $t_1t_3$}
\put(23,16){\small $t_2t_3$}
\put(-39,67){\small $t_1t_3$}
\put(23,67){\small $t_1t_2$}
\put(-7,80){\small $t_1t_2$}
\end{picture}

\caption{Symmetrized basis for $H^*_T(GL_3(\mathbb{C})/B$}\label{figure: symmetrized basis for flag variety}
\end{figure}
In each case the polynomial at the bottom vertex was transported by the group action around all of the vertices. This $S_n$ action actually descends to Grassmannians as well.  Figure \ref{figure: symmetrized basis P} shows the symmetrized basis for splines on $\mathbb{P}^2$ as a comparison. Guillemin-Sabatini-Zara constructed examples of symmetrized bases as a step towards identifying bases for splines that arise as fiber bundles \cite{GSZ12, GSZ13}. 
\begin{figure}[h]
\begin{picture}(50,70)(-50,-10)
\put(0,0){\circle*{5}}
\put(0,50){\circle*{5}}
\put(-25,25){\circle*{5}}

\put(0,0){\line(0,1){50}}
\put(0,0){\line(-1,1){25}}
\put(-25,25){\line(1,1){25}}

\put(5,22){\color{gray} $t_3-t_1$}
\put(-46,7){\color{gray} $t_2-t_1$}
\put(-48,38){\color{gray} $t_3-t_2$}

\put(-3,-14){$1$}
\put(-3,56){$1$}
\put(-37,21){$1$}
\end{picture}
\hspace{1.5in}
\begin{picture}(50,70)(-50,-10)
\put(0,0){\circle*{5}}
\put(0,50){\circle*{5}}
\put(-25,25){\circle*{5}}

\put(0,0){\line(0,1){50}}
\put(0,0){\line(-1,1){25}}
\put(-25,25){\line(1,1){25}}

\put(5,22){\color{gray} $t_3-t_1$}
\put(-46,7){\color{gray} $t_2-t_1$}
\put(-48,38){\color{gray} $t_3-t_2$}

\put(-3,-14){$t_1$}
\put(-3,56){$t_3$}
\put(-38,21){$t_2$}
\end{picture}
\hspace{1.5in}
\begin{picture}(50,70)(-50,-10)
\put(0,0){\circle*{5}}
\put(0,50){\circle*{5}}
\put(-25,25){\circle*{5}}

\put(0,0){\line(0,1){50}}
\put(0,0){\line(-1,1){25}}
\put(-25,25){\line(1,1){25}}

\put(5,22){\color{gray} $t_3-t_1$}
\put(-46,7){\color{gray} $t_2-t_1$}
\put(-48,38){\color{gray} $t_3-t_2$}

\put(-4,-14){$t_1^2$}
\put(-4,57){$t_3^2$}
\put(-39,21){$t_2^2$}
\end{picture}
\caption{Symmetrized basis for $H^*_T(\mathbb{P}^2,\mathbb{C})$}\label{figure: symmetrized basis P}
\end{figure}
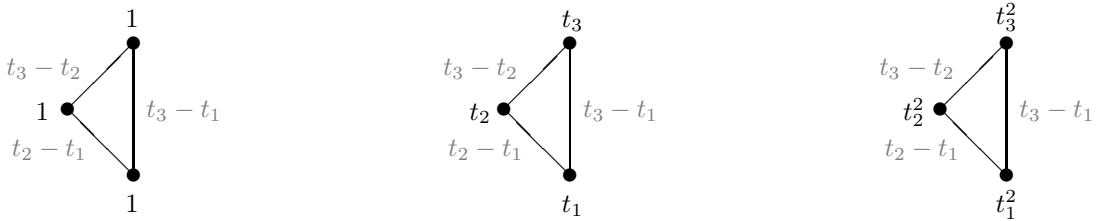 
 
Symmetrized bases are not nearly as common as flow-up bases in the literature. In one sense the construction is more algebraic than geometric since it relies on a representation on cohomology.   Moreover this symmetrization only describes the whole module if the representation on splines is trivial, which is not usually the case.   For instance the left action of $S_n$ restricts to the splines on the Hessenberg varieties shown in Figure \ref{figure: Hessenberg varieties}.  
\begin{figure}[h]
\begin{center}
\begin{tabular}{|c|c|}
\cline{1-2} Hessenberg function & Number of symmetrized basis classes \\
\cline{1-2} h=(3,3,3) & 6 \\
\cline{1-2} h=(2,3,3) & 4 \\
\cline{1-2} h=(2,2,3) & 2 \\
\cline{1-2} h=(1,2,3) & 1 \\
\hline
\end{tabular}
\end{center}
\caption{Symmetrized basis classes for splines corresponding to regular semisimple Hessenberg varieties}\label{figure: symmetrized basis Hess vars}
\end{figure}
In general the number of symmetrized basis elements decreases as the parameter $h$ changes, as shown in Table \ref{figure: symmetrized basis Hess vars}.  Currently the only known formula computes the number of symmetrized class rather than the classes themselves.  It is an open question to find an explicit formula for a maximal set of basis classes for the equivariant cohomology of regular semisimple Hessenberg varieties that is also symmetric under the $S_n$-action.  (These symmetrized basis classes generate a subring that is isomorphic to the cohomology of a different family of Hessenberg varieties called regular nilpotent Hessenberg varieties \cite{AHHM}.)

\subsection{Subvarieties}

Containment is another natural geometric relationship to consider, though one with more limited uses.  This has been most extensively considered for a GKM space $Y$ which is a subvariety of another GKM space $X$, both with respect to the same torus action.  Schubert varieties are one example, as are other examples that come from flowing subspaces as far as possible in one particular direction.  In these cases information about bases and geometric representations for $X$ can be applied directly to splines on $Y$.  

In general some of the most interesting results about subvarieties come from leveraging information about bases for splines of the ambient space.  For instance both the Grassmannian $G/P$ and the quotient $P/B$ can be viewed as submodules of the splines $H^*_T(GL_n(\mathbb{C})/B)$ by restricting the Schubert classes to specific fixed points in the moment graph.  Together with Billey's formula, this permits calculations that aren't as easy when $G/P$ or $P/B$ are viewed just in their own right \cite{DreTym}. 

The technique of restricting splines to the fixed points lying inside certain subvarieties has been used even when the subvariety $Y$ is not a GKM space with respect to the same torus action \cite{HarTym}.  This technique---called {\em poset pinball}---can be used to construct the $\mathbb{C}^*$-equivariant cohomology of $Y$.  In some sense this is a geometry/topology problem more than a problem about splines: it restricts splines to various subsets of vertices, but often the cohomology $H^*_{\mathbb{C}^*}(Y)$ is not a spline ring itself.  

\section{Algebraic generalization of splines}\label{section: generalized splines}

We conclude with an algebraic generalization of splines that gives a more abstract framework for some of the problems that we have considered here.  

To define this more general version of splines, fix a graph $G=(V,E)$.  For simplicity we assume that $G$ has a finite vertex set, no multiedges (so each pair of vertices has at most one edge between them), and no loops (so there are no edges from a vertex to itself).  Let $R$ be any commutative ring with identity.  A map $\alpha: E \rightarrow \{\textup{ ideals in }R\}$ is called an {\em edge-labeling} of the graph $G$.  

The ring of splines on the edge-labeled graph $(G,\alpha)$ is defined to be
\[R_{G,\alpha} = \{p \in R^{|V|}: \textup{ for each edge } uv \textup{ the difference }p_u - p_v \textup{ is in } \alpha(uv)\}\]

Different choices of parameters in $R_{G,\alpha}$ naturally recover the parameters $r$ and $d$ in the classical ring of splines $S^r_d(G)$.  The parameter $r$ arises from a map $\alpha$ that sends each edge $uv$ to an ideal $\langle \ell_{uv}^{r+1} \rangle$ where $\ell_{uv}$ is a homogenous linear form determined by the slope of the edge $uv$ in the standard way (see also Billera's dual graph construction \cite{Bil88}). The parameter $d$ arises by choosing the ring
\[R = \mathbb{R}[x_1,x_2,\ldots,x_n]/\langle \textup{ monomials of degree at least } d+1 \rangle\]
Intuitively (and somewhat inaccurately) this means: compute splines over the ring $\mathbb{R}[x_1,x_2,\ldots,x_n]$ and ``throw away" everything  whose degree is too big.  One subtlety is that the division algorithm does not hold for multivariable polynomial rings; the field of Gr\"{o}bner bases developed in order to deal with the complications that arise.  

Geometers have not studied the rings corresponding to $S^r_d(G)$ when $d$ is finite or $r$ is nonzero, though it seems likely that $S^0_d(G)$ carries useful geometric information (as Billera noted).  The geometric implications of varying $r$ are more mysterious, and interesting.

A few words on bases in this context are useful.  Again we consider module bases rather than vector space bases.  Indeed splines over an arbitrary  ring $R$ do not necessarily have vector-space bases (for instance when $R$ is the integers, described more below).  Of more concern, the module of splines may not even be free, both in interesting \cite{DiP12} and uninteresting (e.g. when $R$ has zero divisors) ways.

When we first considered flow-up bases in Section \ref{subsection: torus flow-up bases}, we said that geometers typically require the flow-up class $p^v$ restricted to the vertex $v$ to be the product of labels on edges directed out of $v$.  There are examples of flow-up bases for generalized splines for which this condition does not hold (see \cite{HMR}, who give an example over the integers).  If $X$ is a GKM space then Definition \ref{definition: flow-up} (and the GKM conditions) implies the product condition on $p^v_v$.

In fact one could generalize splines even more, along the lines of Braden-MacPherson's construction of intersection homology \cite{BraMac01}.  Their starting point is a graph with a module $M_v$ at each vertex.  At each edge there is a module $M_{uv}$ and maps $\varphi_u: M_u \rightarrow M_{uv}$ and $\varphi_v: M_u \rightarrow M_{uv}$.  A spline in this context is a collection $p \in \prod_v M_v$ so that for each edge $uv$
\[\varphi_u(p_u) = \varphi_v(p_v)\]
When each $M_v$ is $R$, each $M_{uv}$ is $R/\alpha(uv)$, and each map $\varphi_u: M_u \rightarrow M_{uv}$ is the standard quotient map, we recover the definition above.  

\subsection{Splines over the integers and over $\mathbb{Z}/m\mathbb{Z}$}

Splines over the integers have interesting connections to number theory.  In fact the Chinese Remainder theorem can be restated as a question about splines on the path with three vertices whose edge labels are the ideals $m\mathbb{Z}$ and $m'\mathbb{Z}$ shown in Figure \ref{figure: CRT}.  We can ask whether given integers $a$ and $a'$ associated to the vertices in Figure \ref{figure: CRT} can be extended to a spline on the entire graph. 
\begin{figure}[h]
\begin{picture}(100,30)(0,0)
\multiput(10,10)(45,0){3}{\circle*{5}}
\put(10,10){\line(1,0){90}}

\put(25,0){\color{gray} $m$}
\put(70,0){\color{gray} $m'$}

\put(7,15){$a$}
\put(97,15){$a'$}
\put(52,15){$x$}
\end{picture}
\caption{Splines and the Chinese Remainder theorem}\label{figure: CRT}
\end{figure}
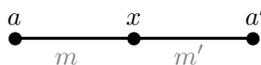
In other words is there an integer $x$ associated to the middle vertex so that
\[\left\{ \begin{array}{l}
x \equiv a \hspace{.25em} \textup{  mod } m \textup{ and } \\
x \equiv a' \textup{ mod }m' 
\end{array} \right.\]
This is precisely what the Chinese Remainder theorem determines.

As in the Chinese Remainder theorem, splines over $\mathbb{Z}$ are closely related to splines over $\mathbb{Z}/m\mathbb{Z}$.  We summarize what is currently known about splines over $\mathbb{Z}$ and $\mathbb{Z}/m\mathbb{Z}$.
\begin{itemize}
\item Splines over $\mathbb{Z}$ always form a free module of rank $|V(G)|$ \cite{GPT}.  (In other words they have a basis with the same number of elements as vertices in $G$.)  By contrast splines over $\mathbb{Z}/m\mathbb{Z}$ can have any rank between $1$ and $|V(G)|$ depending on the graph \cite{BowTym}.  Splines over $\mathbb{Z}/m\mathbb{Z}$ are not generally free because they are finite $\mathbb{Z}$-modules; like other finite $\mathbb{Z}$-modules, they have a natural analogue of a basis.
\item Given an edge-labeled graph $(G,\alpha)$, Bowden-Tymoczko identified the ring of splines over $\mathbb{Z}/m\mathbb{Z}$ using the structure theorem for finite abelian groups \cite{BowTym}.  Bowden-Philbin-Swift-Tammaro chose a particular $m$ and showed explicitly that the minimal representatives of the basis for splines over $\mathbb{Z}/m\mathbb{Z}$ also forms a basis for splines on $(G,\alpha)$ over $\mathbb{Z}$ \cite{BPST}.
\item Explicit bases for splines are known for several families of graphs, including trees (using arbitrary rings) \cite{GPT} and cycles (using $\mathbb{Z}$) \cite{HMR}.  Handschy-Melnick-Reinders gave an explicit formula for the smallest value of each element of their flow-up basis \cite{HMR}.   Bowden-Hagen-King-Reinders gave an explicit formula for each entry of each basis element of splines on cycles over $\mathbb{Z}$ under the hypothesis that one pair of adjacent edge-labels is relatively prime \cite{BHKR}.  (The general GKM package implies each pair of adjacent edge-labels is relatively prime.)
\item With a basis, we can try to explicitly describe multiplication tables as in Section \ref{subsection: Grassmannians}.  Even in the case of integers and integers mod $m$ this can be a challenge.  Bowden-Hagen-King-Reinders used their basis for splines on some cycles to write an explicit multiplication table \cite{BHKR}.  Bowden-Philbin-Swift-Tammaro  gave an explicit formula for the multiplication table of splines mod $m$ over arbitrary graphs, using a more implicitly defined basis \cite{BPST}.  
\end{itemize}

Much of this work extends to PIDs in general, including $\mathbb{C}[x]$ and $\mathbb{R}[x]$.  Some of it even extends to $\mathbb{Z}[x]$ in the geometric context even though $\mathbb{Z}[x]$ is a quintessential example of a ring that is not a PID.  Indeed if $X$ is a GKM space for which GKM theory can be used with {\em integer} coefficients (like the flag variety), then we can restrict the torus  $T$ to various one-dimensional subgroups and get a map from splines over $\mathbb{Z}[t_1,\ldots,t_n]$ to $\mathbb{Z}[x]$.  Judicious choice of the one-dimensional subgroup together with the GKM conditions imply that each edge-label is an ideal of the form $nx\mathbb{Z}$ for some integer $n$.  The ring of splines in that case is very closely related to the ring of splines over integers obtained by evaluating at $x=1$ as in \cite[Theorem 2.12]{GPT}.  

\subsection{Graph operations}

One of the most interesting aspects of generalized splines is that natural graph operations also give useful information.  In this section we will describe ways to remove parts of a graph.  This often changes the underlying graph in ways that violate conditions from classical splines or geometry.  In a simple case of our first example, removing a vertex together with its incident edges leaves a graph that no longer satisfies the regularity conditions that moment graphs of manifolds satisfy.  In the second example, removing an edge typically results in a graph that is not dual to a triangulation.

Let us first consider the effects on $R_{G,\alpha}$ of removing a subgraph of $G$.   Suppose $G'$ is a subgraph of $G$.  Let $M_{G/G'}$ be the collection of splines that are zero on all of the vertices of $G'$, namely
\[M_{G/G'} = \{p \in R_{G}: p(u)=0 \textup{ for all } u \in V(G)-V(G')\}\]  
Define the restriction map $\phi: R_{G} \stackrel{}{\rightarrow} R_{G'}$ that forgets all vertices in $G$ but not $G'$, namely
\[(p_u)_{u \in V(G)} \mapsto (p_u)_{u \in V(G')}\]
Then $M_{G/G'}$ is the kernel of $\phi$.  In other words
\[M_{G/G'} \hookrightarrow R_{G} \stackrel{\phi}{\rightarrow} R_{G'}\]
is exact.  Moreover the first isomorphism theorem tells us
\[\textup{Im} \phi \cong R_{G'}/M_{G'/G}\]
Colloquially $\textup{Im }\phi$ tells us which splines can be extend from $G'$ to $G$.  When $G'$ consists of a single vertex, this corresponds to adding a single triangle to a triangulation. 

We can also remove edges from the graph $G$, effectively removing the conditions between two vertices.  If $G'$ is a graph obtained from $G$ by erasing some edges then we get an inclusion   $R_{G} \hookrightarrow R_{G'}$.  For instance if $G$ is planar graph, we can choose a vertex as the root and take a spanning subtree $T \subseteq G$ for which the distance between two vertices in $T$ is the same as in $G$. The injection $R(G) \hookrightarrow R(T)$ describes $R(G)$ as a submodule of a free module whose generators are well-understood \cite[Theorem 4.1]{GPT}.  Refining the way edges are added to $T$ gives a sequence of inclusions 
\[R(G) \hookrightarrow R(G_i) \hookrightarrow \cdots \hookrightarrow R(G_1) \hookrightarrow R(T)\] 
This can constrain the codimension of each inclusion in interesting ways.   We know of no good analogue to this operation for splines over triangulations: removing an edge appears to ``cut" the plane so that two triangles are no longer adjacent, without moving or removing those triangles.  

We conclude with another example, an algorithm that uses graph operations like contractions and deletions to produce explicit bases for splines over $\mathbb{Z}$.  The algorithm has the following steps:
\begin{enumerate}
\item If $G$ is a tree then we have an explicit formula for a flow-up basis for its splines \cite[Theorem 4.1]{GPT}.
\item Otherwise suppose $v$ is a vertex in $G$ with neighbors $N(v)$.  Construct the (multi)graph $G_v'$ by erasing $v$ and any incident edges from $G$ and then add all possible edges $uu'$ for $u \neq u'$ with $u,u' \in N(v)$.  The edge $uu'$ is labeled with the ideal $\alpha_{uv} \oplus \alpha_{u'v}$. 
\item If $G_v'$ has any multiedges replace it with the graph $G_v$ constructed as follows.  If the pair of vertices $uu'$ have edges labeled $I_1$, $I_2$, \ldots, $I_k$ then replace those edges with a single edge labeled $I_1 \cap I_2 \cap \cdots I_k$. 
\item Return to Step 1 with the graph $G_v$.
\end{enumerate}
The algorithm eventually terminates since the graph with a single vertex is a tree.  The core of the proof of the algorithm is that the operations of $\oplus$ and $\cap$ distribute so that $R_G$ surjects onto $R_{G_v}$.  In fact this surjection holds for all PIDs (including $\mathbb{C}[x]$ and $\mathbb{R}[x]$) and the algorithm holds for all PIDs.  Figure \ref{figure: algorithm using graph operations} shows the graph reductions described by this algorithm on a complete graph using the integers.  In this case $\langle m \rangle \oplus \langle m' \rangle$ is just $\gcd(m,m')$ while $\langle m \rangle \cap \langle m' \rangle = \textup{lcm}(m,m')$.
\begin{figure}[h]
\scalebox{0.8}{
\begin{picture}(100,130)(0,0)
\put(50,0){\circle*{5}}
\put(100,50){\circle*{5}}
\put(0,75){\circle*{5}}
\put(50,125){\circle*{5}}

\put(50,0){\line(1,1){50}}
\put(50,0){\line(-2,3){50}}
\put(50,0){\line(0,1){125}}

\put(100,50){\line(-4,1){100}}
\put(100,50){\line(-2,3){50}}

\put(0,75){\line(1,1){50}}

\put(18,25){4}
\put(85,25){8}
\put(75,45){6}
\put(85,80){7}
\put(25,110){3}
\put(40,95){6}
\end{picture}
}
\hspace{0.25in} \begin{picture}(10,10)(0,0)\put(0,30){$\longrightarrow$} \end{picture} \hspace{0.5in}
\scalebox{0.8}{
\begin{picture}(100,130)(0,0)
\put(50,0){\circle*{8}}
\put(100,50){\circle*{8}}
\put(0,75){\circle*{8}}

\put(50,-2){\line(1,1){50}}
\put(50,2){\line(-2,3){50}}
\put(50,2){\line(1,1){50}}
\put(50,-2){\line(-2,3){50}}

\put(100,48){\line(-4,1){100}}
\put(100,52){\line(-4,1){100}}


\put(35,28){4}
\put(65,25){8}
\put(45,50){6}

\put(65,25){8}
\put(25,77){$\langle 3 \rangle \oplus \langle 7 \rangle = \langle 1 \rangle$}
\put(-21,28){$\langle 3 \rangle \oplus \langle 6 \rangle$}
\put(5,8){$=\langle 3 \rangle$}
\put(85,23){$\langle 7 \rangle \oplus \langle 6 \rangle$}
\put(93,8){$=\langle 1 \rangle$}

\end{picture}
}
\hspace{0.5in} \begin{picture}(10,10)(0,0)\put(0,30){$\longrightarrow$} \end{picture} \hspace{0.35in}
\scalebox{0.8}{
\begin{picture}(100,130)(0,0)
\put(50,0){\circle*{8}}
\put(100,50){\circle*{8}}
\put(0,75){\circle*{8}}

\put(50,0){\line(1,1){50}}
\put(50,0){\line(-2,3){50}}

\put(100,50){\line(-4,1){100}}


\put(35,28){12}
\put(65,25){8}
\put(45,50){6}

\end{picture}
}

\vspace{-.5in}\hspace{1.85in} \begin{picture}(10,10)(0,0)\put(0,20){$\longrightarrow$} \end{picture} \hspace{-0.25in}
\scalebox{0.8}{
\begin{picture}(100,130)(0,0)
\put(50,0){\circle*{8}}
\put(100,50){\circle*{8}}

\put(50,-2){\line(1,1){50}}
\put(50,2){\line(1,1){50}}

\put(65,25){8}
\put(80,20){$\langle 6 \rangle \oplus \langle 12 \rangle$}
\put(88,10){$= \langle 6 \rangle$}
\end{picture}
}
\hspace{0.5in} \begin{picture}(10,10)(0,0)\put(0,20){$\longrightarrow$} \end{picture} \hspace{-0.25in}
\scalebox{0.8}{
\begin{picture}(100,130)(0,0)
\put(50,0){\circle*{8}}
\put(100,50){\circle*{8}}

\put(50,0){\line(1,1){50}}

\put(60,25){24}
\end{picture}
}
\caption{Computing a basis for splines over $\mathbb{Z}$}\label{figure: algorithm using graph operations}
\end{figure}
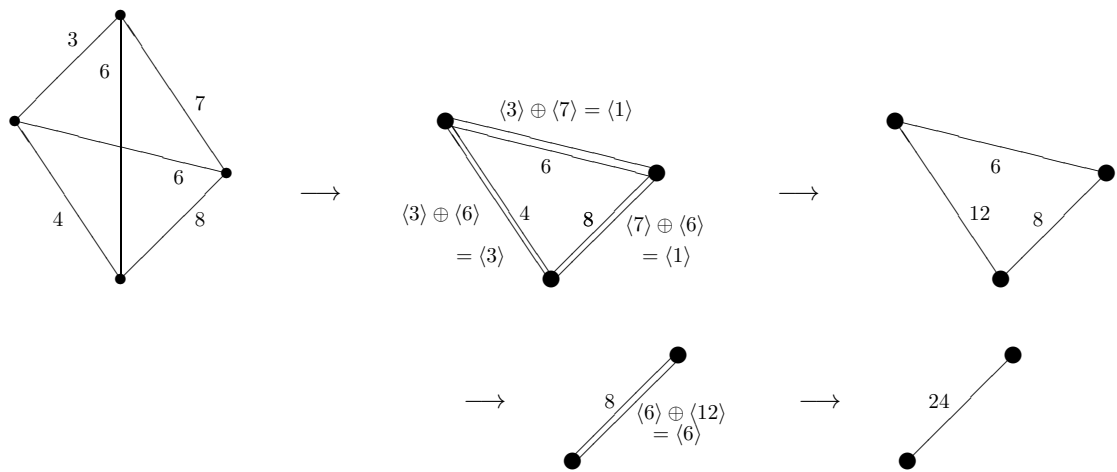
The final graph is an edge so its flow-up basis is $\{[1,1], [0,24]\}$.  The basis for the triangle surjects onto this basis, with flow-up basis itself of $\{[1,1,1], [0,24,24], [0,0,12]\}$.  (The last entry in the second basis vector requires a small calculation.)  The basis for the square surjects onto this basis, with flow-up basis itself of $\{[1,1,1,1], [0,24,24,24], [0,0,12,42],[0,0,0,42]\}$. 

\section{Acknowledgements}

The author is exceptionally grateful to Tanya Sorokina, Larry Schumaker, Hal Schenck, Alexei Kolesnikov and Gwyneth Whieldon for useful discussions, and the Mathematische Forschungs Institute Oberwolfach for sponsoring a productive workshop.  This work was partially supported by NSF grants DMS--1362855 and DMS--1248171.

\def\cprime{$'$}

\end{document}